\newtheorem{tm}{Theorem}
\newtheorem{defin}{Definition}
\newtheorem{rk}{Remark}
\newtheorem{prop}{Proposition}
\newtheorem{lem}{Lemma}
\numberwithin{equation}{section}
\newcommand{\RR}[1]{\mathbb{#1}}
\newcommand{\rd}{{\mathbb R^d}}
\def\R{{\mathbb R}}
\def\l{{\langle}}
\def\r{\rangle}
\def\a{\alpha}
\def\eps{\varepsilon}
\def\E{{\mathbb E}}
\def\P{{\mathbb P}}
\begin{document}

\title{Space-time fractional stochastic partial differential equations}

\author{Jebessa B Mijena}
\address{Department of Mathematics, Georgia College \& State University, Milledgeville, GA 31061}
\email{jebessa.mijena@gcsu.edu}
\author{ Erkan Nane}
\address{Department of Mathematics and Statistics, Auburn University, Auburn, AL 36849, USA}
\email{nane@auburn.edu}

%\author{Yimin Xiao}
%\address{Department Statistics and Probability,
%Michigan State University, East Lansing, MI 48823}
%\email{xiaoyimi@stt.msu.edu}
%\thanks{Research partially
%supported by NSF grant DMS-1006903.}

\keywords{}

\date{\today}

\subjclass[2000]{}

\begin{abstract}
We consider   non-linear time-fractional  stochastic heat type equation
$$\partial^\beta_tu_t(x)=-\nu(-\Delta)^{\alpha/2} u_t(x)+I^{1-\beta}_t[\sigma(u)\stackrel{\cdot}{W}(t,x)]$$ in $(d+1)$ dimensions, where $\nu>0, \beta\in (0,1)$, $\alpha\in (0,2]$ and $d<\min\{2,\beta^{-1}\}\a$,  $\partial^\beta_t$ is the Caputo fractional derivative, $-(-\Delta)^{\alpha/2} $ is the generator of an isotropic stable process, $I^{1-\beta}_t$ is the fractional integral operator,  $\stackrel{\cdot}{W}(t,x)$ is space-time white noise, and $\sigma:\RR{R}\to\RR{R}$ is Lipschitz continuous.
 Time fractional stochastic heat type equations might be used to model phenomenon with random effects with thermal memory. We prove existence and uniqueness of mild solutions to this equation and establish conditions under which the solution is continuous. Our results  extend the   results in the case of parabolic stochastic partial differential equations obtained in \cite{foondun-khoshnevisan-09, walsh}. In sharp contrast to the
   stochastic  partial differential equations studied earlier in \cite{foondun-khoshnevisan-09, khoshnevisan-cbms, walsh}, in some  cases our results give  existence of random field solutions in spatial dimensions $d=1,2,3$. Under faster than linear growth of $\sigma$, we show that time fractional stochastic partial differential equation   has no finite energy solution. This extends the result of Foondun and Parshad \cite{foondun-parshad} in the case of parabolic stochastic partial differential equations. We also establish a connection of the time fractional stochastic  partial differential equations to higher order parabolic stochastic differential equations.
\end{abstract}

\keywords{Time-fractional stochastic partial differential equations; fractional Duhamel's principle; Caputo derivatives; Walsh isometry}

\maketitle

\section{Introduction}
Stochastic partial differential equations (SPDE) have been studied  in  mathematics, and various sciences; see, for example, Khoshnevisan \cite{khoshnevisan-cbms} for a  big list of references. The area of SPDEs is interesting to mathematicians as it contains big number of hard open problems. SPDE's have  been applied in many disciplines that include applied mathematics, statistical mechanics, theoretical physics,  theoretical neuroscience, theory of complex chemical reactions, fluid dynamics, hydrology,  and mathematical finance.
In this paper we introduce new  time fractional stochastic partial differential equations (TSPDE) in the sense of Walsh \cite{walsh},  and prove existence and uniqueness of  mild solutions. We  establish the conditions under which the solution is continuous. When $\sigma $  grows faster than Lipschitz we show that  no finite energy solution exists.

 The study of the time-fractional diffusion equations has  recently attracted a lot of attention and a typical form of the time fractional equations is $\partial^\beta_tu=\Delta u$   where $\partial^\beta_t$ is the Caputo fractional derivative with $\beta\in (0,1)$ and $\Delta=\sum_{i=1}^d\partial^2_{x_i} $ is the Laplacian. These equations are related with anomalous diffusions or diffusions in non-homogeneous media, with random fractal structures; see, for instance, \cite{meerschaert-nane-xiao}.
The Caputo fractional derivative $\partial^\beta_t$  first appeared in \cite{Caputo} is defined for $0<\beta<1$ by
\begin{equation}\label{CaputoDef}
\partial^\beta_t u_t(x)=\frac{1}{\Gamma(1-\beta)}\int_0^t \partial_r
u_r(x)\frac{dr}{(t-r)^\beta} .
\end{equation}
Its Laplace transform
\begin{equation}\label{CaputolT}
\int_0^\infty e^{-st} \partial_t^\beta u_t(x) dt=s^\beta \tilde u_s(x)-s^{\beta-1} u_0(x),
\end{equation}
where $\tilde u_s(x) = \int_0^\infty e^{-st}u_t(x)dt$ and incorporates the initial value in the same way as the first
derivative.

Starting from the works  \cite{Koc89, Nig86, Wyss86, zaslavsky} much effort has been made in order to introduce a rigorous mathematical approach; see,  for example,  \cite{NANERW} for a short survey on these results. The solutions to fractional diffusion equations are strictly related with stable densities. Indeed, the stochastic solutions of time fractional  diffusions  can be realized through time-change by inverse stable subordinators.   A couple of recent works in this field are \cite{mnv-09, OB09}.

%{\bf The Model}
It might come natural to add just  a noise term to the time fractional diffusion and  study the  equation
 \begin{equation}\label{tfspde-0}
 \partial^\beta_tu_t(x)=\Delta u_t(x)+\stackrel{\cdot}{W}(t,x);\ \  u_t(x)|_{t=0}=u_0(x),
 \end{equation}
 where $\stackrel{\cdot}{W}(t,x)$ is a   space-time white noise with $x\in \rd$.

We will make use of {\bf time fractional Duhamel's principle} \cite{umarov-06, umarov-12,Umarov-saydamatov} to get the correct version of \eqref{tfspde-0}.
Let $G_t(x)$ be the fundamental solution of the time fractional PDE $\partial_t^\beta u=\Delta  u$.
 The solution to the time-fractional PDE with force term $f(t,x)$
 \begin{equation}\label{tfpde}
 \partial^\beta_tu_t(x)=\Delta  u_t(x)+f(t,x);\ \  u_t(x)|_{t=0}=u_0(x),
 \end{equation}
 is given by Duhamel's principle, the influence of the external force $f(t,x)$ to the output can be count as
 \begin{equation}
 \partial^\beta_tV(\tau, t,x)=\Delta V(\tau, t,x);\ \  V(\tau,\tau,x)= \partial^{1-\beta}_t f (t, x)|_{t=\tau},
 \end{equation}
  which has solution
 $$
 V(t,\tau, x)=\int_{\rd}G_{t-\tau}(x-y) \partial^{1-\beta}_\tau f (\tau, y)dy.
 $$
Hence solution to \eqref{tfpde} is given by

$$u(t,x)=\int_\rd G_{t}(x-y)u_0(y)dy+\int_0^t\int_\rd G_{t-\tau}(x-y)\partial^{1-\beta}_r f(r,y)dydr.$$

 Hence if we use time fractional Duhamel's principle we will get the mild (integral) solution of
 \eqref{tfspde-0}
   to be of the form (informally):
 \begin{equation}\label{mild-sol-der-noise}\begin{split}
u(t,x)&=\int_\rd G_t(x-y)u_0(y)dy\\
\ \ \ \ &+\int_0^t\int_\rd G_{t-\tau}(x-y) \partial^{1-\beta}_r[ \stackrel{\cdot}{W}(r,y)]dydr.
\end{split}\end{equation}
It is not clear  what the fractional derivative of the space-time white noise mean.

We can remove  the fractional derivative of the noise term  in \eqref{mild-sol-der-noise} in the following way.
Let $\gamma>0$, define the fractional integral by
$$I^{\gamma}_tf(t):=\frac{1}{\Gamma(\gamma)} \int _0^t(t-\tau)^{\gamma-1}f(\tau)d\tau.$$
For every  $\beta\in (0,1)$, and   $g\in L^\infty(\R_+)$ or $g\in C(\R_+)$
$$ \partial _t^\beta I^\beta_t g(t)=g(t).$$
We consider the time fractional PDE with a force given by $f(t,x)=I^{1-\beta}_tg(t,x)$, then by the Duhamel's principle the  mild solution to
\eqref{tfpde} will be given by
$$u_t(x)=\int_{\rd} G_t(x-y)u_0(y)dy+\int_0^t\int_{\rd} G_{t-r}(x-y) g(r,y)dydr.$$

Hence, the preceding discussion suggest  that the correct TFSPDE is the
 following model problem:
 \begin{equation}\label{tfspde-1}
 \partial^\beta_tu_t(x)=\Delta u_t(x)+I^{1-\beta}_t[\stackrel{\cdot}{W}(t,x)];\ \  u_t(x)|_{t=0}=u_0(x).
 \end{equation}
When $ d=1$, the fractional integral above in equation \eqref{tfspde-1}  is defined as
 $$I^{1-\beta}_t[\sigma(u)\stackrel{\cdot}{W}(t,x)]= \frac{1}{\Gamma(1-\beta)} \int _0^t(t-\tau)^{-\beta}\frac{\partial W(d\tau,x)}{\partial x},$$  is well defined only when $0<\beta<1/2$!  %It is a type of % {\color{red}Rieman-Liouville process!}
 By the Duhamel's principle, mentioned above,    mild (integral) solution of  \eqref{tfspde-1} will be  (informally):
\begin{equation}\label{mild-sol-tfspde}\begin{split}
u_t(x)&=\int_\rd G_t(x-y)u_0(y)dy+\int_0^t\int_{\R^d} G_{t-r}(x-y)W(dydr).
\end{split}
\end{equation}

{ Next we want to give a {\bf Physical motivation} to study time fractional SPDEs.}
The time-fractional SPDEs studied in this paper may arise naturally by considering the heat equation in a material with thermal memory; see \cite{chen-kim-kim-2014}  for more details:
Let $u_t(x), e(t,x)$ and $\stackrel{\to}{F}(t,x)$ denote the body temperature, internal energy and flux density, respectively.
Then the relations
\begin{equation}\begin{split}
\partial_te(t,x)&=-div \stackrel{\to}{F}(t,x),\\
e(t,x)=\beta u_t(x), \ \ \stackrel{\to}{F}(t,x)&=-\lambda\nabla u_t(x),
\end{split}\end{equation}
yields the classical heat equation $\beta\partial_tu=\lambda\Delta u$.

According to the law of classical heat equation, the speed of heat flow is infinite
but  the propagation speed can be finite because the heat flow can be disrupted by the response of the material.
In a material with thermal memory Lunardi and Sinestrari \cite{lunardi-sinestrari},  von Wolfersdorf \cite{von-wolfersdorf}  showed that
$$e(t,x)=\bar{\beta}u_t(x)+\int_0^tn(t-s)u_s(x)ds,$$
holds with some appropriate constant $\bar{\beta}$ and kernel $n$.  In most cases we would have $n(t)=\Gamma(1-\beta_1)^{-1}t^{-\beta_1}$.
The convolution implies that the nearer past affects the present more.
If in addition the internal energy also depends on past random effects, then
\begin{equation}\label{phys-1}
\begin{split}
e(t,x)&=\bar{\beta}u_t(x)+\int_0^tn(t-s)u_s(x)ds\\
\ \ \ &+\int_0^t l(t-s)h(s, u_s(x))\frac{\partial {W}(ds, x)}{\partial x},
\end{split}\end{equation}
where $W$ is the space time white noise modeling the random effects.
Take $l(t)=\Gamma(2-\beta_2)^{-1}t^{1-\beta_2}$,
then after differentiation \eqref{phys-1} gives
$$
\partial_t^{\beta_1}u=div\stackrel{\to}{F}+ \frac{1}{\Gamma(1-\beta_2)} \int _0^t(t-s)^{-\beta_2}h(s,u_s(x))\frac{\partial W(ds,x)}{\partial x}.$$

%Let $\gamma>0$, define the fractional integral by
%$$I^{\gamma}_tf(t):=\frac{1}{\Gamma(\gamma)} \int _0^t(t-\tau)^{\gamma-1}f(\tau)d\tau.$$

%For every  $\beta>0$, and   $g\in L^\infty(\rr_+)$ or $g\in C(\rr_+)$
%$$ \partial _t^\beta I^\beta_t g(t)=g(t).$$

 {In this paper we will study existence and uniqueness of  mild solutions to this type of stochastic equations and  its extensions:}
 %\begin{equation}\label{tfspde}
% \partial^\beta_tu(t,x)=L_x u(t,x)+I^{1-\beta}_t[\sigma(u)\stackrel{\cdot}{W}(t,x)];\ \  u(0,x)=u^0(x),
% \end{equation}
 \begin{equation}\label{tfspde}
\begin{split}
 \partial^\beta_tu_t(x)&=-\nu(-\Delta)^{\alpha/2} u_t(x)+I^{1-\beta}_t[\sigma(u)\stackrel{\cdot}{W}(t,x)],\ \ \nu>0, t> 0,\, x\in\R^d;\\
 u_t(x)|_{t=0}&=u_0(x),
 \end{split}
 \end{equation}
where the initial datum $u_0$ is measurable and bounded,
 $-(-\Delta)^{\alpha/2} $ is the fractional Laplacian with $\alpha\in (0,2]$,  $\stackrel{\cdot}{W}(t,x)$ is a  space-time white noise with $x\in \R^d$, and $\sigma:\R\to\R$ is a Lipschitz function.
%When $\sigma(u)=1$, the fractional integral above in equation \eqref{tfspde}  is defined as
 %$$I^{1-\beta}_t[\sigma(u)\stackrel{\cdot}{W}(t,x)]= \frac{1}{\Gamma(1-\beta)} \int _0^t(t-\tau)^{-\beta}\sigma(u(\tau, x))\frac{\partial W(d\tau,x)}{\partial x}$$  is well defined only when $0<\beta<1/2$. It is a type of %Rieman-Liouville process.

Let $G_t(x)$ be the fundamental solution
of  the fractional heat type equation
\begin{equation}\label{Eq:Green0}
\partial^\beta_tG_t(x)=-\nu(-\Delta)^{\alpha/2}G_t(x).
\end{equation}
We know that $G_t(x)$ is the density function of $X(E_t)$, where $X$ is an isotropic $\a$-stable L\'evy process in $\R^d$ and $E_t$ is the first passage time of a $\beta$-stable subordinator $D=\{D_r,\,r\ge0\}$, or the inverse stable subordinator of index $\beta$: see, for example, Bertoin \cite{bertoin} for properties of these processes, Baeumer and Meerschaert \cite{fracCauchy} for more on time fractional diffusion equations, and Meerschaert and Scheffler  \cite{limitCTRW} for properties of the inverse stable subordinator $E_t$.

Let $p_{{X(s)}}(x)$ and $f_{E_t}(s)$ be the density of $X(s)$ and $E_t$, respectively. Then the Fourier transform of $p_{{X(s)}}(x)$ is given by
\begin{equation}\label{Eq:F_pX}
\widehat{p_{X(s)}}(\xi)=e^{-s\nu|\xi|^\a},
\end{equation}
and
\begin{equation}\label{Etdens0}
f_{E_t}(x)=t\beta^{-1}x^{-1-1/\beta}g_\beta(tx^{-1/\beta}),
\end{equation}
where $g_\beta(\cdot)$ is the density function of $D_1.$ The function $g_\beta(u)$ (cf. Meerschaert and Straka \cite{meerschaert-straka}) is infinitely differentiable on the entire real line, with $g_\beta(u)=0$ for $u\le 0$.

%\begin{equation}\label{Eq:gbeta0}
%g_\beta(u)\sim K(\beta/u)^{(1-\beta/2)/(1-\beta)}\exp\{-|1-\beta|(u/\beta)^{\beta/(\beta-1)}\}\quad\mbox{as}\,\, u\to0+,
%\end{equation}
%and
%\begin{equation}\label{Eq:gbetainf}
%g_\beta(u)\sim\frac{\beta}{\Gamma(1-\beta)}u^{-\beta-1} \quad\mbox{as}\,\, u\to\infty.
%\end{equation}
By conditioning, we have
\begin{equation}\label{Eq:Green1}
G_t(x)=\int_{0}^\infty p_{_{X(s)}}(x) f_{E_t}(s)ds.
\end{equation}

 A related  time-fractional  SPDE was studied by  Karczewska \cite{karczewska}, Chen  et al. \cite{chen-kim-kim-2014}, and Baeumer et al \cite{baeumer-Geissert-Kovacs}. They have proved regularity of the solutions to the time-fractional parabolic type SPDEs using cylindrical Brownain motion in Banach spaces  in line with the methods in \cite{daPrato-Zabczyk}.

In this paper we study the existence and uniqueness of the solution to (\ref{tfspde}) under global Lipchitz conditions on $\sigma$, using the white noise  approach of \cite{walsh}:
%Assume that $\sigma(\cdot)$ satisfies the following global Lipschitz condition, i.e. there exists a generic positive constant $A$ such that and growth conditions:
%\begin{equation}\label{Eq:Cond_a}
%|\sigma(x)-\sigma(y)|\le A|x-y|\quad\mbox{for all }\,\,x,\,y\in\R.
%\end{equation}
%Clearly, (\ref{Eq:Cond_a}) implies the uniform linear growth condition of $\sigma(\cdot).$
We say that
an $\mathcal{F}_t$-adapted random field $\{u_t(x),\,t\ge 0,\,x\in\R^d\}$ is a mild solution of (\ref{tfspde}) with initial value $u_0$ if the following integral equation is fulfilled
\begin{equation}\label{Eq:Mild}
u_t(x)=\int_{\R^d}u_0(y)G_t(x-y)dy+\int_0^t\int_{\R^d}\sigma(u_r(y))G_{t-r}(x-y)W(drdy).
\end{equation}

For a comparison of the two approaches to SPDE's see the paper by Dalang and Quer-Sardanyons \cite{Dalang-Quer-Sardanyons}.

%Let $T$ be a fixed positive number, and let $B_{T,p}$ denote the family of all $\mathcal{F}_t$-adapted random fields $\{u(t,x),\,t\in [0,\,T],\,x\in\R^d\}$ satisfying
%\begin{equation}\label{Eq:SquareInt}
%\sup_{x\in\R^d}\sup_{t\in[0,\,T]}\E\left[|u(t,x)|^p\right]<\infty,
%\end{equation}
%with the convention that $B_{T,2}=B_T$. It is easy to check that for each fixed $T$ and $p,$ $B_{T,p}$ is a Banach space.

%\begin{tm}[
%Nane et al. \cite{nane-et-al-2014} proved the existence and uniqueness result  for the equation \eqref{tfspde} when
%]\label{Thm:ExUnq}
% $d<\min\{2,\beta^{-1}\}\a$,  equation (\ref{tfspde}) subject to (\ref{Eq:Initial}) and global Lipschitz condition on $\sigma$ has an a.s.-unique solution %$u(t,x)$ that satisfies that for all $T>0,$
%$u(t,x)\in B_{T,p}.$
%\end{tm}

%\section{The model problem}\label{model}
%{\color{red} add some intro about the intermittency here}
%{\color{red} add a paragraph that tells about the sections and the place of the main results!}

%{\color{red}Please look at how I described the paper organization: It would be better if section 6 results about existence and uniqueness of solution comes before section 4}

{We now briefly give an outline of the paper. We adapt the methods of  proofs of the results in  \cite{khoshnevisan-cbms} with  many crucial nontrivial changes. We give some preliminary results in section 2. Moment estimates for time increments and spatial increments of the solution is given in Section 3. The main result in this section is Theorem \ref{thm:continuity-stoch-convolution}. In section 4, we give main results of the paper about existence and uniqueness, and the  continuity of the solution to the time fractional SPDEs. The main result is   Theorem \ref{thm:existence-moment}. In Section 5, we show that under faster than linear growth of $\sigma$ there is  no finite energy solution. In section 6, we discuss the  equivalence of time-fractional SPDEs to higher order SPDEs.
Throughout the paper, we use the letter $C$ or $c$ with or without subscripts to denote
a constant whose value is not important and may vary from places to places.}
%Finally, The existence and uniqueness of solution of  \eqref{Eq:modelspde} will be proved in section 6.

\section{Preliminaries}

In this section, we give some preliminary results that will be needed in the remaining sections of the paper.

Applying the Laplace transform with respect to time variable t, Fourier transform with respect to space variable x.
 Laplace-Fourier transform of $G$ defined in  \eqref{Eq:Green1} is given by
\begin{eqnarray}
\int_{0}^{\infty}\int_{{\R^d}} e^{-\lambda t + i\xi\cdot x}G_t(x)dxdt&=&\int_{0}^{\infty}e^{-\lambda t}dt\int_{0}^\infty f_{E_t}(s)ds\int_{{\R^d}} e^{i\xi\cdot x}p_{X(s)}(x)dx\nonumber\\
&=&\int_{0}^{\infty}e^{-s\nu|\xi|^\alpha}ds\int_{0}^\infty e^{-\lambda t}f_{E_t}(s)dt\nonumber\\
&=&\frac{\lambda^\beta}{\lambda}\int_{0}^\infty e^{-s(\nu|\xi|^\alpha +\lambda^\beta) }ds\nonumber\\
&=&\frac{\lambda^{\beta-1}}{\lambda^{\beta} + \nu|\xi|^\alpha},
\end{eqnarray}
here we used the fact that the laplace transform $t\to\lambda $ of $f_{E_t}(u)$ is given by $\lambda^{\beta-1}e^{-u\lambda^\beta}$.
Using the convention, $\sim$ to denote the Laplace transform and $\ast$ the Fourier transform we get
\begin{equation}
\tilde{G}^\ast_t(x) = \frac{\lambda^{\beta-1}}{\lambda^{\beta} +\nu |\xi|^\alpha}.
\end{equation}
Inverting the Laplace transform, it yields
\begin{equation}\label{fouriertransformofG}
G^\ast_t(\xi) = E_\beta(-\nu|\xi|^\alpha t^\beta),
\end{equation}
where
\begin{equation}\label{ML-function}
E_\beta(x) = \sum_{k=0}^\infty\frac{x^k}{\Gamma(1+\beta k)},
\end{equation}
 is the Mittag-Leffler function. In order to invert the Fourier transform, we will make use of the integral \cite[eq. 12.9]{haubold-mathai-saxena}
\begin{equation}
\int_0^\infty\cos(ks)E_{\beta,\a}(-as^\mu)ds = \frac{\pi}{k}H_{3,3}^{2,1}\bigg[\frac{k^\mu}{a}\bigg|^{(1,1), (\a,\beta), (1,\mu/2)}_{(1,\mu),(1,1),(1,\mu/2)}\bigg],\nonumber
\end{equation}
%{\color{red} Could you explain this notation of H-function?}{\color{blue}Done! I gave A reference since it is long to define the H-function}
where $\mathcal{R}(\a)>0,\mathcal{\beta}>0,k>0,a>0, H_{p,q}^{m,n}$ is the H-function given in \cite[Definition 1.9.1, p. 55]{mathai} and the formula
\begin{equation}
\frac{1}{2\pi}\int_{-\infty}^\infty e^{-i\xi x}f(\xi)d\xi = \frac{1}{\pi}\int_0^\infty f(\xi)\cos(\xi x)d\xi.\nonumber
\end{equation}
Then this gives the function as
\begin{equation}\label{G-function}
G_t(x) = \frac{1}{|x|} H_{3,3}^{2,1}\bigg[\frac{|x|^\a}{\nu t^\beta}\bigg|^{(1,1), (1,\beta), (1,\a/2)}_{(1,\a),(1,1),(1,\a/2)}\bigg].
\end{equation}
Note that for $\a = 2$ using reduction formula for the H-function we have
\begin{equation}
G_t(x) = \frac{1}{|x|}H^{1,0}_{1,1}\bigg[\frac{|x|^2}{\nu t^\beta}\bigg|^{(1,\beta)}_{(1,2)}\bigg]
\end{equation}
Note  that for $\beta = 1$ it reduces to the Gaussian density
\begin{equation}
G_t(x) = \frac{1}{(4\nu\pi t)^{1/2}}\exp\left(-\frac{|x|^2}{4\nu t}\right).
\end{equation}
Recall
 uniform estimate of Mittag-Leffler function \cite[Theorem 4]{simon}
 \begin{equation}\label{uniformbound}
 \frac{1}{1 + \Gamma(1-\beta)x}\leq E_{\beta}(-x)\leq \frac{1}{1+\Gamma(1+\beta)^{-1}x} \ \  \ \text{for}\ x>0.
 \end{equation}

\begin{lem}\label{Lem:Green1} For $d < 2\a,$
\begin{equation}\label{Eq:Greenint}
\int_{{\R^d}}G^2_t(x)dx  =C^\ast t^{-\beta d/\a}
\end{equation}
where $C^\ast = \frac{(\nu )^{-d/\a}2\pi^{d/2}}{\a\Gamma(\frac d2)}\frac{1}{(2\pi)^d}\int_0^\infty z^{d/\a-1} (E_\beta(-z))^2 dz.$
\end{lem}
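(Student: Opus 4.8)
The plan is to compute $\|G_t\|_{L^2(\rd)}^2$ directly via Plancherel's identity, exploiting the fact that we already know the Fourier transform of $G_t$ explicitly from \eqref{fouriertransformofG}, and then to reduce the resulting radial integral to the stated constant by an elementary change of variables. The only genuine point of care is the finiteness of the limiting radial integral, and this is exactly where the hypothesis $d<2\a$ is used.

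First I would note that $G_t\ge 0$ is the density of $X(E_t)$, so $G_t\in L^1(\rd)$, and that by \eqref{fouriertransformofG} its Fourier transform is $\widehat{G_t}(\xi)=E_\beta(-\nu|\xi|^\a t^\beta)$. Passing to polar coordinates (the surface measure of $S^{d-1}$ being $2\pi^{d/2}/\Gamma(d/2)$) and then substituting $z=\nu t^\beta\rho^\a$, so that $\rho=(z/(\nu t^\beta))^{1/\a}$ and $\rho^{d-1}\,d\rho=\tfrac1\a(\nu t^\beta)^{-d/\a}z^{d/\a-1}\,dz$, one gets
\begin{equation}
\frac{1}{(2\pi)^d}\int_{\rd}\bigl(E_\beta(-\nu|\xi|^\a t^\beta)\bigr)^2\,d\xi
=\frac{2\pi^{d/2}}{(2\pi)^d\,\Gamma(d/2)}\cdot\frac{(\nu t^\beta)^{-d/\a}}{\a}\int_0^\infty z^{d/\a-1}\bigl(E_\beta(-z)\bigr)^2\,dz,
\end{equation}
which is precisely $C^\ast t^{-\beta d/\a}$ after pulling $t^{-\beta d/\a}$ out of $(\nu t^\beta)^{-d/\a}$. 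Once the final integral is shown to be finite, the right-hand side above is finite, hence $\widehat{G_t}\in L^2(\rd)$; combined with $G_t\in L^1(\rd)$ this gives $G_t\in L^2(\rd)$ and justifies Plancherel's identity $\int_{\rd}G_t^2(x)\,dx=(2\pi)^{-d}\int_{\rd}|\widehat{G_t}(\xi)|^2\,d\xi$, completing the identity.

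The remaining (and only) obstacle is therefore to check that $J:=\int_0^\infty z^{d/\a-1}\bigl(E_\beta(-z)\bigr)^2\,dz<\infty$. Near $z=0$ the factor $E_\beta(-z)$ is bounded (with $E_\beta(0)=1$) and $z^{d/\a-1}$ is integrable since $d/\a>0$. Near $z=\infty$, the upper estimate in \eqref{uniformbound} gives $E_\beta(-z)\le\bigl(1+\Gamma(1+\beta)^{-1}z\bigr)^{-1}\le C\,z^{-1}$, so $z^{d/\a-1}\bigl(E_\beta(-z)\bigr)^2\le C\,z^{d/\a-3}$, which is integrable at infinity exactly when $d/\a-3<-1$, i.e. when $d<2\a$. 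Thus the hypothesis guarantees $J<\infty$, the constant $C^\ast$ is finite, and the claimed formula follows.
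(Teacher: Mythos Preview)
Your proof is correct and follows essentially the same route as the paper: Plancherel's identity together with \eqref{fouriertransformofG}, integration in polar coordinates, the substitution $z=\nu t^\beta\rho^\a$, and the Mittag-Leffler bound \eqref{uniformbound} to verify finiteness precisely when $d<2\a$. Your version is in fact slightly more careful in justifying the applicability of Plancherel (via $G_t\in L^1$ and $\widehat{G_t}\in L^2$), whereas the paper simply applies it and then bounds the resulting integral from both sides using \eqref{uniformbound} to obtain an explicit Beta-function expression for the upper bound.
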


\begin{proof}
Using Plancherel theorem and \eqref{fouriertransformofG}, we have%, \eqref{uniformbound}, and Mellin transform of Mittag-Leffler function we have
%\begin{eqnarray}
%\int_{-\infty}^{\infty}|G(t,x)|^2 dx &=& \frac{1}{2\pi}\int_{-\infty}^{\infty}|G^\ast(t,\xi)|^2 d\xi\\
%&=& \frac{1}{2\pi}\int_{-\infty}^{\infty}|E_\beta(-|\xi|^\a t^\beta)|^2 d\xi\nonumber\\
%&=& \frac{1}{\pi}\int_{0}^{\infty}E_\beta(-\xi^\a (2t)^\beta) d\xi\nonumber\\
%&=& \frac{1}{\pi \a}\int_{0}^{\infty}u^{1/\a - 1}E_\beta(-(2t)^\beta u) du\nonumber\\
%&=&\frac{1}{\pi\a 2^{\beta/\a}}\frac{\Gamma(1/\a)\Gamma(1-1/\a)}{\Gamma(1-\beta/\a)} t^{-\beta/\a }\nonumber.
%\end{eqnarray}

%\begin{rk}
%\begin{eqnarray}
%\int_{\rd}|G_t(x)|^2 dx &=& \frac{1}{(2\pi)^d}\int_{\rd}|G^\ast_t(\xi)|^2 d\xi\\
%&=& \frac{1}{(2\pi)^d}\int_{\rd}|E_\beta(-\nu|\xi|^\a t^\beta)|^2 d\xi\nonumber\\
%&=& \frac{1}{(2\pi)^d}\int_{\rd}E_\beta(-\nu|\xi|^\a (2t)^\beta) d\xi\nonumber\\
%&=& \frac{2\pi^{d/2}}{\Gamma(\frac d2)}\frac{1}{(2\pi)^d}\int_0^\infty r^{d-1} E_\beta(-\nu r^\a (2t)^\beta) dr\nonumber\\
%&=&\frac{2\pi^{d/2}}{\Gamma(\frac d2)}\frac{1}{\a (2\pi)^d}\int_{0}^{\infty}u^{d/\a - 1}E_\beta(-\nu (2t)^\beta u) du\nonumber\\
%&=&\frac{2^{1-\beta d/\a}\pi^{d/2}}{\Gamma(\frac d2)}\frac{1}{\a (2\pi)^d}\frac{\Gamma(d/\a)\Gamma(1-d/\a)}{\Gamma(1-\beta d/\a)}\nu^{-d/\a}t^{-\beta d/\a}\nonumber.
%\end{eqnarray}
\begin{eqnarray}
\int_{\rd}|G_t(x)|^2 dx &=& \frac{1}{(2\pi)^d}\int_{\rd}|G^\ast_t(\xi)|^2 d\xi= \frac{1}{(2\pi)^d}\int_{\rd}|E_\beta(-\nu|\xi|^\a t^\beta)|^2 d\xi\nonumber\\
&=&\frac{2\pi^{d/2}}{\Gamma(\frac d2)}\frac{1}{(2\pi)^d}\int_0^\infty r^{d-1} (E_\beta(-\nu r^\a t^\beta))^2 dr.\label{square-kernel}\\
&=&\frac{(\nu t^\beta)^{-d/\a}2\pi^{d/2}}{\a\Gamma(\frac d2)}\frac{1}{(2\pi)^d}\int_0^\infty z^{d/\a-1} (E_\beta(-z))^2 dz.
\end{eqnarray}
We used the integration in  polar coordinates for radially symmetric function in  the last equation above. Now using equation \eqref{uniformbound} we get
\begin{eqnarray}\label{uniformboundforE}
\int_{0}^\infty\frac{z^{d/\a-1} }{(1+\Gamma(1-\beta)z)^2} dr&\leq&\int_0^\infty z^{d/\a-1} (E_\beta(-z))^2 dz\nonumber\\
&\leq&\int_{0}^\infty\frac{z^{d/\a-1} }{(1+\Gamma(1+\beta)^{-1}z)^2}dz
\end{eqnarray}
Hence  $\int_0^\infty z^{d/\a-1} (E_\beta(-z))^2 dz<\infty$ if and only if $d<2\a$. In this case
%The lower bound in equation \eqref{square-kernel} is
%begin{eqnarray}\int_{0}^\infty\frac{r^{d-1}}{(1+\Gamma(1-\beta)\nu r^\a t^\beta)^2} dr &=& \frac{1}{\a\Gamma(1-\beta)^{d/\a}(\nu %t^\beta)^{d/\a}}\int_0^\infty\frac{u^{d/\a -1}}{(1+u)^2}du\nonumber\\
%&=& \frac{\text{B}(d/\a, 2-d/\a)}{\a\Gamma(1-\beta)^{d/\a}(\nu t^\beta)^{d/\a}},\nonumber
%\end{eqnarray}
 the upper bound in equation \eqref{uniformboundforE} is
$$\int_0^\infty \frac{z^{d/\a-1} }{(1+\Gamma(1+\beta)^{-1}z)^2} dz = \frac{\text{B}(d/\a, 2-d/\a)}{\Gamma(1+\beta)^{-d/\a}},$$
where $\text{B}(d/\a, 2-d/\a)$ is a Beta function.
\end{proof}
\begin{rk}For $d<2\a$,
\begin{equation}\label{Eq:Greenint2}
\frac{\text{B}(d/\a, 2-d/\a)}{\Gamma(1-\beta)^{d/\a}} z^{-\beta d/\a}\leq\int_{0}^\infty z^{d/\a -1}(E_\beta(-z))^2dz \leq \frac{\text{B}(d/\a, 2-d/\a)}{\Gamma(1+\beta)^{-d/\a}} z^{-\beta d/\a}.\nonumber
\end{equation}
%where $C_0 = \frac{\Gamma(d/\a)\Gamma( 2-d/\a)}{\a\Gamma(d/2)2^{d-1}\pi^{d/2}}\nu^{-d/\a}.$
\end{rk}
%\begin{rk}
%For  special case $d = 1, \a = 2$ and $\beta = 1$ we get
%\begin{equation}\label{specialcase}
%\int_{-\infty}^{\infty}G_t(x)^2 dx = \frac{1}{(8\nu\pi t)^{1/2}}.
%\end{equation}
%\end{rk}
%{\color{red} Check if the following lemma is ok. We have it in $\rd$ now!}{\color{blue} It is correct since we can split it into n integrals}
\begin{lem} For $\lambda\in\rd$ and $\a = 2$,
$$\int_{\rd}e^{\lambda\cdot x}G_s(x)dx = E_\beta(\nu |\lambda|^2 s^\beta).$$
\end{lem}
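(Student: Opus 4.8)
The plan is to combine the subordination identity \eqref{Eq:Green1} with the explicit Gaussian form of $p_{X(u)}$ that is available when $\a=2$. First I would record that, by \eqref{Eq:F_pX} with $\a=2$, $p_{X(u)}$ is the Gaussian density with Fourier transform $e^{-u\nu|\xi|^2}$, namely $p_{X(u)}(x)=(4\pi\nu u)^{-d/2}\exp(-|x|^2/(4\nu u))$. Completing the square (equivalently, continuing the Fourier transform analytically to $\xi=-i\lambda$) gives
$$\int_{\rd}e^{\lambda\cdot x}p_{X(u)}(x)\,dx=e^{\nu|\lambda|^2 u},\qquad \lambda\in\rd,\ u>0.$$
Since $p_{X(u)}\ge0$ and $f_{E_s}\ge0$, Tonelli's theorem applies to \eqref{Eq:Green1}, and therefore
$$\int_{\rd}e^{\lambda\cdot x}G_s(x)\,dx=\int_0^\infty f_{E_s}(u)\Big(\int_{\rd}e^{\lambda\cdot x}p_{X(u)}(x)\,dx\Big)du=\int_0^\infty e^{\nu|\lambda|^2 u}f_{E_s}(u)\,du=\E\big[e^{\nu|\lambda|^2 E_s}\big].$$
So it remains only to identify the moment generating function of the inverse stable subordinator $E_s$ as a Mittag-Leffler function.

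For this I would use the moments $\E[E_s^n]=n!\,s^{n\beta}/\Gamma(1+n\beta)$, $n\ge0$, which follow by inverting in $s$ the Laplace transform $\rho^{\beta-1}e^{-u\rho^\beta}$ of $f_{E_s}(u)$ recalled in Section 2: indeed $\int_0^\infty e^{-\rho s}\E[E_s^n]\,ds=\int_0^\infty u^n\rho^{\beta-1}e^{-u\rho^\beta}\,du=n!\,\rho^{-1-n\beta}$, whose inverse Laplace transform is $n!\,s^{n\beta}/\Gamma(1+n\beta)$. Writing $\theta=\nu|\lambda|^2\ge0$, all terms of $e^{\theta E_s}=\sum_{n\ge0}\theta^n E_s^n/n!$ are nonnegative, so by monotone convergence
$$\E\big[e^{\theta E_s}\big]=\sum_{n=0}^\infty\frac{\theta^n}{n!}\E[E_s^n]=\sum_{n=0}^\infty\frac{(\theta s^\beta)^n}{\Gamma(1+n\beta)}=E_\beta(\theta s^\beta),$$
by the definition \eqref{ML-function}; taking $\theta=\nu|\lambda|^2$ yields the lemma. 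Alternatively one can take the Laplace transform in $s$ of both sides of the claimed identity: Fubini together with the same Laplace transform of $f_{E_s}$ gives $\int_0^\infty e^{-\rho s}\E[e^{\theta E_s}]\,ds=\rho^{\beta-1}/(\rho^\beta-\theta)$ for $\rho^\beta>\theta$, which is exactly the Laplace transform of $s\mapsto E_\beta(\theta s^\beta)$ computed termwise from \eqref{ML-function}, and uniqueness of the Laplace transform concludes.

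I do not expect a genuine obstacle here. Every interchange above is between nonnegative integrands or nonnegative series, so Tonelli and monotone convergence apply with no integrability hypothesis needed; the finiteness of $E_\beta(\nu|\lambda|^2 s^\beta)$ --- the Mittag-Leffler function is entire --- then confirms a posteriori that the exponential moment is finite (equivalently, $f_{E_s}$ decays faster than any exponential at $+\infty$, which one can also read off from \eqref{Etdens0} and the small-argument behaviour of $g_\beta$). The only input beyond elementary calculation is the Laplace transform of $f_{E_s}$, which is already used in Section 2.
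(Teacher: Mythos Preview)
Your proposal is correct and follows essentially the same route as the paper: subordination formula \eqref{Eq:Green1}, the Gaussian moment generating function $\int_{\rd}e^{\lambda\cdot x}p_{X(u)}(x)\,dx=e^{\nu|\lambda|^2 u}$, then the moment formula $\E[E_s^n]=n!\,s^{n\beta}/\Gamma(1+n\beta)$ combined with termwise expansion of the exponential to recover $E_\beta$. You are simply more explicit than the paper about the justifications (Tonelli, monotone convergence, and the Laplace-transform derivation of the moments), and your alternative via uniqueness of the Laplace transform in $s$ is a minor variant of the same idea.
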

\begin{proof}
Using uniqueness of Laplace transform we can easily show $\E[E_s^k] = \frac{\Gamma(1+k)s^{\beta k}}{\Gamma(1+\beta k)}$ for $k>-1$. Therefore,
using this and moment-generating function of Gaussian densities we have
\begin{eqnarray}
\int_{\rd}e^{\lambda\cdot x}G_s(x)dx &=& \int_{0}^\infty \int_{\rd}e^{\lambda\cdot x}\frac{e^{-\frac{|x|^2}{4\nu u}}}{(4\pi\nu u)^{d/2}}dxf_{E_s}(u)du\nonumber\\
&=& \int_{0}^\infty e^{\nu |\lambda|^2 u}f_{E_s}(u)du\nonumber\\
&=&\sum_{k=0}^{\infty}\frac{\nu^k|\lambda|^{2k}}{k!}\int_0^\infty u^kf_{E_s}(u)du\nonumber\\
&=& \sum_{k=0}^{\infty}\frac{\nu^k|\lambda|^{2k}}{k!}\frac{\Gamma(1+k)s^{\beta k}}{\Gamma(1+\beta k)}.
\end{eqnarray}

\end{proof}

$\stackrel{\cdot}{W}(t,x)$ is a space-time white noise  with $x\in \R^d$, which is assumed to be adapted with respect to a filtered probability space $(\Omega, \mathcal{F},  \mathcal{F}_t, \P)$, where $ \mathcal{F}$ is complete and the filtration $\{\mathcal{F}_t, t\geq 0\}$ is right continuous. $\stackrel{\cdot}{W}(t,x)$ is  generalized processes with covariance given by
$$
\E\bigg[\stackrel{\cdot}{W}(t,x) \stackrel{\cdot}{W}(s,y)\bigg]=\delta(x-y)\delta(t-s).
$$
That is, $W(f) $ is a random field indexed by functions $ f\in L^2((0,\infty)\times \R^d )$ and for all $f,g\in L^2((0,\infty)\times \R ^d)$, we have
$$
\E\bigg[W(f)W(g)\bigg]=\int_0^\infty \int_{\R^d} f(t,x)g(t,x)dxdt.
$$

Hence $W(f)$ can be represented as
$$
W(f)=\int_0^\infty \int_{\R^d} f(t,x)W(dxdt).
$$
Note that $W(f)$ is $\mathcal{F}_t$-measurable whenever $f$ is supported on $[0,t]\times\R^d$.

{ Next we give  the  definition of Walsh-Dalang Integrals that is used in equation \eqref{Eq:Mild}.}
We use the Brownian Filtration $\{\mathcal{F}_t\}$ and the Walsh-Dalang integrals defined as follows:
\begin{itemize}
\item $(t,x)\to \Phi_t(x)$ is an elementary random field when $\exists 0\leq a<b$ and an $\mathcal{F}_a$-measurable $X\in L^2(\Omega)$ and $\phi\in L^2({\rd})$ such that
    $$
    \Phi_t(x)=X1_{[a,b]}(t)\phi(x)\ \ \ (t>0, x\in \rd).
    $$
%\item A random field $\Phi$ is simple  if $\exists$ elementary random fields $\Phi^{(1)}, \cdots, \Phi^{(n)}$, with disjoint supports, such that
 %       $\Phi=\sum_{i=1}^n\Phi^{(i)}$.
 \item  If $h=h_t(x)$ is non-random and $\Phi$ is elementary, then
 $$
 \int h\Phi dW:=X\int_{(a,b)\times\rd}h_t(x)\phi(x)W(dtdx).
 $$
 \item The stochastic integral is Wiener's, and it is  well defined iff $h_t(x)\phi(x)\in L^2([a,b]\times\rd)$.
 \item We have Walsh isometry,
 $$
 \E \bigg(\bigg|\int h\Phi dW\bigg|^2\bigg)=\int_0^\infty\int_{\rd}dy[h_s(y)]^2\E(|\Phi_s(y)|^2).
 $$

\end{itemize}

Let $\Phi$ be a random field, and for every $\gamma>0$ and $k\in [2, \infty)$ define
\begin{equation}
\mathcal{N}_{\gamma,k}(\Phi) := \sup_{t\geq 0}\sup_{x\in\R^d}\left(e^{-\gamma t}||\Phi_t(x)||_k\right)=\sup_{t\geq 0}\sup_{x\in\R^d}\left(e^{-\gamma t}\bigg(\E[|\Phi_t(x)|^k]\bigg)^{1/k}\right).
\end{equation}
%The special case $k=1$ has already played a role in our construction of stochastic integrals.
If we identify a.s.-equal random fields, then every $\mathcal{N}_{\gamma, k} $ becomes a norm. Moreover, $\mathcal{N}_{\gamma, k} $ and $\mathcal{N}_{\gamma', k} $ are equivalent norms for all $\gamma, \gamma'>0$ and $k\in [2,\infty).$ Finally, we note that if $\mathcal{N}_{\gamma, k}(\Phi) <\infty$ for some $\gamma >0$ and $k\in(2,\infty)$, then $\mathcal{N}_{\gamma, 2}(\Phi)<\infty$  as well, thanks to Jensen's inequality.

\begin{defin}\label{comp-space}
We denote by $\mathcal{L}^{\gamma, 2}$  the completion of the space of all simple  random fields in the norm $\mathcal{N}_{\gamma, 2}.$
\end{defin}

Given a random field $\Phi := \{\Phi_t(x)\}_{t\geq 0, x\in\R^d}$ and space-time noise $W$, we define the [space-time] \textit{stochastic convolution} $G\circledast \Phi$ to be the random field that is defined as
$$(G\circledast \Phi)_t(x) := \int_{(0,t)\times \R^d} G_{t-s}(y-x)\Phi_s(y)W(dsdy),$$
for $t>0$ and $x\in\R^d,$ and $(G\circledast W)_0(x) := 0.$

We can understand the properties of $G\circledast \Phi$ for every fixed $t>0$ and $x\in\R^d$ as follows. Define
\begin{equation}
G_s^{(t,x)}(y) := G_{t-s}(y-x)\cdot {\bf{1}}_{(0,t)}(s)\ \ \ \mbox{for all}\ s\geq 0\ \mbox{and}\ y\in\R^d.
\end{equation}
Clearly, $G^{(t,x)}\in L^2(\R_+\times \R^d)$ for $d<\min\{2,\beta^{-1}\}\a$; in fact,
$$\int_0^\infty ds \int_{\R^d} [G_s^{(t,x)}(y)]^2dy =  \int_0^t ds \int_{\R^d} [G_{s}(y)]^2dy = C^\ast t^{1-\beta d/\a}<\infty.$$
This computation follows from Lemma \ref{Lem:Green1}. Thus, we may interpret the random variable $(G\circledast \Phi)_t(x)$ as the stochastic integral $\int G_s^{(t,x)}\Phi dW$, provided that $\Phi$ is in $\mathcal{L}^{\beta, 2}$ for some $\beta>0.$ Let us recall that $\Phi\mapsto G\circledast \Phi$ is a random linear map; that is, if $\Phi, \Psi
\in \mathcal{L}^{\beta, 2}$ for some $\beta > 0,$ then for all $a, b\in\R$ the following holds almost surely:
\begin{eqnarray}
&&\int_{(0,t)\times \R^d} G_{t-s}(y-x)(a\Phi_s(y) + b\Psi(y))W(dsdy)\nonumber\\
&=&a\int_{(0,t)\times \R^d} G_{t-s}(y-x)\Phi_s(y)W(dsdy) + b\int_{(0,t)\times \R^d} G_{t-s}(y-x)\Psi_s(y)W(dsdy)\nonumber
\end{eqnarray}

\section{Estimates on the moments of the increments of the solution}
In this section we prove continuity of various increments related to the solution of \eqref{tfspde}. We start with the next result
\begin{lem}\label{DiffEst}
Suppose $d<\min(2,\beta^{-1})\a$, and  $p\ge 2$, then we have the following estimates for time increments of the density G.

\noindent (i). For $t\leq t',$ %and $\beta d/\a<1$,
 we have
\begin{eqnarray}%\frac{(2^{\beta d/\a}-2)C^\ast}{(1-\beta d/\a)}(t' - t)^{1-\beta d/\a}&\leq&
\int_0^t\int_{\R^d}[G_{t'-r}(x-y)-G_{t-r}(x-y)]^2drdy
\leq\frac{C^\ast}{(1-\beta d/\a)}(t' - t)^{1-\beta d/\a},\nonumber
\end{eqnarray}
where $C^\ast$ is a constant given in Lemma \ref{Lem:Green1}.

\noindent (ii). For  $x,x'\in\R^d,$ we have
\begin{eqnarray}c|x'-x|^2&\leq&\int_0^t\int_{\R^d}[G_{t-r}(x-y)-G_{t-r}(x'-y)]^2drdy\nonumber\\
&\leq& C|x'-x|^{\min\left\{\left(\frac{\a-\beta d}{\beta}\right)^{-},2\right\}}.
\end{eqnarray}
\end{lem}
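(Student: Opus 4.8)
Everything goes through the spatial Fourier transform, using $G_s^\ast(\xi)=E_\beta(-\nu|\xi|^\a s^\beta)$ from \eqref{fouriertransformofG}, the Plancherel identity, and the two–sided Mittag--Leffler bound \eqref{uniformbound}. Two facts about $G_s^\ast$ are used repeatedly: it is real valued, and since $G_s^\ast(\xi)=\E\big[\exp(-\nu|\xi|^\a E_s)\big]$ while $s\mapsto E_s$ is pathwise non-decreasing, the map $s\mapsto G_s^\ast(\xi)$ is non-increasing with values in $[0,1]$.

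\noindent\textbf{Part (i).} Substitute $s=t-r$ and set $h:=t'-t\ge0$; by Plancherel the left side equals $(2\pi)^{-d}\int_0^t\!\int_{\R^d}\big(G_s^\ast(\xi)-G_{s+h}^\ast(\xi)\big)^2\,d\xi\,ds$. Since $0\le G_{s+h}^\ast(\xi)\le G_s^\ast(\xi)\le1$, the elementary inequality $(a-b)^2\le a^2-b^2$ (for $0\le b\le a$) gives $(G_s^\ast-G_{s+h}^\ast)^2\le(G_s^\ast)^2-(G_{s+h}^\ast)^2$; integrating in $(\xi,s)$ and applying Plancherel and Lemma~\ref{Lem:Green1} again, the left side of (i) is at most
\begin{equation*}
C^\ast\int_0^t\big(s^{-\beta d/\a}-(s+h)^{-\beta d/\a}\big)\,ds=\frac{C^\ast}{1-\beta d/\a}\big(t^{1-\beta d/\a}+h^{1-\beta d/\a}-(t+h)^{1-\beta d/\a}\big).
\end{equation*}
Under $d<\min\{2,\beta^{-1}\}\a$ one has $1-\beta d/\a\in(0,1)$, so $u\mapsto u^{1-\beta d/\a}$ is subadditive, hence $t^{1-\beta d/\a}-(t+h)^{1-\beta d/\a}\le0$ and the right side is $\le\frac{C^\ast}{1-\beta d/\a}(t'-t)^{1-\beta d/\a}$, which is the claim.

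\noindent\textbf{Part (ii).} After $s=t-r$ and the translation $z=x-y$, $w:=x'-x$, Plancherel gives
\begin{equation*}
\int_0^t\!\int_{\R^d}\big[G_{t-r}(x-y)-G_{t-r}(x'-y)\big]^2\,dy\,dr=\frac{1}{(2\pi)^d}\int_0^t\!\int_{\R^d}\big|1-e^{i\xi\cdot w}\big|^2\,E_\beta(-\nu|\xi|^\a s^\beta)^2\,d\xi\,ds .
\end{equation*}
Both inequalities are to be read for bounded increments (say $|w|\le1$; for larger $|w|$ the left side is $\le4C^\ast t^{1-\beta d/\a}$ by Lemma~\ref{Lem:Green1}, so the upper bound is trivial). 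For the \emph{lower} bound I would keep only $s\in[t/2,t]$ and $|\xi|\le1$, where $E_\beta(-\nu|\xi|^\a s^\beta)\ge E_\beta(-\nu t^\beta)>0$, and use $|1-e^{i\xi\cdot w}|^2=2(1-\cos(\xi\cdot w))\ge\tfrac12(\xi\cdot w)^2$ together with $\int_{|\xi|\le1}(\xi\cdot w)^2\,d\xi=c_d|w|^2$ (rotational symmetry) to get a lower bound $c\,t\,|w|^2$. For the \emph{upper} bound I would apply the interpolation $|1-e^{i\theta}|^2\le4^{1-\rho/2}|\theta|^\rho$, valid for every $\rho\in[0,2]$, so the right side is $\le C_\rho|w|^\rho\int_0^t\!\int_{\R^d}|\xi|^\rho E_\beta(-\nu|\xi|^\a s^\beta)^2\,d\xi\,ds$; passing to polar coordinates in $\xi$ and using $E_\beta(-x)\le C_\beta/(1+x)$ from \eqref{uniformbound}, the $\xi$-integral is a Beta function times $s^{-\beta(d+\rho)/\a}$ provided $\rho<2\a-d$, and then $\int_0^t s^{-\beta(d+\rho)/\a}\,ds<\infty$ provided $\rho<\a/\beta-d=(\a-\beta d)/\beta$. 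Hence the bound $C(\rho,t)|w|^\rho$ holds for every $\rho<\min\{2\a-d,\,(\a-\beta d)/\beta,\,2\}$; letting $\rho$ increase to this supremum yields the exponent $\min\{((\a-\beta d)/\beta)^-,2\}$, the constraint $(\a-\beta d)/\beta$ being the binding one when $\beta\ge\tfrac12$ (for $\beta<\tfrac12$ one instead keeps $\int_0^t(\cdots)\,ds$ and tracks it directly, which is also where the endpoint $(\cdot)^-$ comes from).

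\noindent\textbf{Main obstacle.} The delicate point is the sharp spatial exponent in (ii): here $t$ is fixed and the time integral \emph{cannot} be pushed to $(0,\infty)$ — for $\beta\le\tfrac12$ it diverges — so the decay in $|\xi|$ of $\int_0^t E_\beta(-\nu|\xi|^\a s^\beta)^2\,ds$ must be obtained by hand via the rescaling $v=\nu|\xi|^\a s^\beta$ and the Mittag--Leffler asymptotics; the borderline exponent $\rho=(\a-\beta d)/\beta$ produces a logarithm in $|w|$ rather than a clean power, which is exactly what forces the exponent $((\a-\beta d)/\beta)^-$ in place of $(\a-\beta d)/\beta$. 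Part (i), the lower bound in (ii), and the convergence bookkeeping for the $\xi$-integrals are routine once the Plancherel reductions and Lemma~\ref{Lem:Green1} are available.
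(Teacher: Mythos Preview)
Your proposal is correct and follows essentially the same route as the paper: Plancherel plus monotonicity of $s\mapsto E_\beta(-\nu|\xi|^\a s^\beta)$ for (i), and the interpolation bound $|1-e^{i\theta}|^2\le 4^{1-\rho/2}|\theta|^\rho$ with a case split on the admissible $\rho$ for the upper bound in (ii), together with a restriction to small frequencies for the lower bound. Your packaging of (i) via $(a-b)^2\le a^2-b^2$ is algebraically identical to the paper's expansion of the square followed by the cross-term bound $2G^\ast_{s}G^\ast_{s+h}\ge 2(G^\ast_{s+h})^2$, yielding the same integrand $C^\ast\big(s^{-\beta d/\a}-(s+h)^{-\beta d/\a}\big)$ and the same final estimate.
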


\begin{proof}
Using $G^\ast_{t'-r}(x-\cdot)- G^\ast_{t-r}(x-\cdot) = e^{i\xi\cdot x}G^\ast_{t'-r} (\xi)-e^{i\xi\cdot x}G^\ast_{t-r}(\xi)$, Plancherel theorem and computation in Lemma \ref{Lem:Green1} we have
\begin{eqnarray}
&&\int_{\R^d}|G_{t'-r}(x-y)-G_{t-r}(x-y)|^2dy\nonumber\\
 &=&\frac{1}{(2\pi)^d}\int_{\R^d}|G^\ast_{t'-r}(\xi)- G^\ast_{t-r}(\xi)|^2 d\xi\nonumber\\
 &=&\frac{1}{(2\pi)^d}\int_{\R^d} (G^\ast_{t'-r}(\xi))^2d\xi + \frac{1}{(2\pi)^d}\int_{\R^d} (G^\ast_{t-r}(\xi))^2d\xi \nonumber\\
 &-&\frac{2}{(2\pi)^d}\int_{\R^d} G^\ast_{t'-r}(\xi)G^\ast_{t-r}(\xi)d\xi \nonumber\\
 &=& C^\ast (t'- r)^{-\beta d/\a} + C^\ast (t-r)^{-\beta d/\a} -\frac{2}{(2\pi)^d}\int_{\R^d} G^\ast_{t'-r}(\xi)G^\ast_{t-r}(\xi)d\xi.\nonumber
\end{eqnarray}

Using  Plancharel theorem, integration in polar coordinates in $\rd$, and the fact that $f(z)=E_\beta(-z)$ is decreasing (since it is completely monotonic, i.e. $(-1)^nf^{(n)}(z)\geq 0$ for all $z>0$, $n=0,1,2,3,\cdots$), we get
\begin{eqnarray}
 &&\frac{2}{(2\pi)^d}\int_{\R^d} G^\ast_{t'-r}(\xi)G^\ast_{t-r}(\xi)d\xi \nonumber\\
 &=&\frac{2}{(2\pi)^d}\int_{\R^d}E_\beta(-\nu(t'-r)^\beta |\xi|^\alpha)E_\beta(-\nu(t-r)^\beta |\xi|^\alpha)d\xi \nonumber\\
 &=&\frac{2\pi^{d/2}}{\Gamma(\frac d2)}\frac{2}{(2\pi)^d}\int_{0}^\infty  s^{d-1}E_\beta(-\nu(t'-r)^\beta s^\alpha)E_\beta(-\nu(t-r)^\beta s^\alpha)ds \nonumber\\
 &\geq &\frac{2\pi^{d/2}}{\Gamma(\frac d2)}\frac{2}{(2\pi)^d}\int_{0}^\infty s^{d-1}E_\beta(-\nu(t'-r)^\beta s^\alpha)E_\beta(-\nu(t'-r)^\beta s^\alpha)ds \nonumber\\
&\geq &\frac{2\pi^{d/2}}{\Gamma(\frac d2)}\frac{2}{(2\pi)^d}\int_{0}^\infty s^{d-1}\bigg( E_\beta(-\nu(t'-r)^\beta s^\alpha)\bigg)^2ds \nonumber\\
 &=& \frac{2\pi^{d/2}}{\Gamma(\frac d2)}\frac{2 \nu^{- d/\a}(t'-r)^{-\beta d/\a}}{\a (2\pi)^d}\int_{0}^\infty z^{d/\a-1}\bigg( E_\beta(-z)\bigg)^2dz \nonumber\\
 &=& 2C^*(t'-r)^{-\beta d/\a}.\nonumber
\end{eqnarray}
%here $C(volume)=\frac{2\pi^{d/2}}{\Gamma(\frac d2)}$ is the volume of the unit sphere in $\rd$.

Now integrating both sides wrt to $r$ from $0$ to $t$ we get
\begin{eqnarray}\label{asmtotitc}
&&\int_0^t\int_{{\R^d}}[G_{t'-r}(x-y)-G_{t-r}(x-y)]^2drdy\\
&\leq &\frac{-C^\ast (t'- t)^{1-\beta d/\a}}{1-\beta d/\a} + \frac{C^\ast (t')^{1-\beta d/\a}}{1-\beta d/\a} + \frac{C^\ast t^{1-\beta d/\a}}{1-\beta d/\a}\nonumber\\
&+&\frac{ 2C^\ast (t'- t)^{1-\beta d/\a}}{1-\beta d/\a}-\frac{ 2C^\ast(t')^{1-\beta d/\a}}{1-\beta d/\a}\nonumber\\
&=&\frac{C^\ast(t'- t)^{1-\beta d/\a}}{1-\beta d/\a}-\frac{C^\ast (t')^{1-\beta d/\a}}{1-\beta d/\a}+\frac{C^\ast t^{1-\beta d/\a}}{1-\beta d/\a}\nonumber\\
&\leq &\frac{C^\ast(t'- t)^{1-\beta d/\a}}{1-\beta d/\a},\nonumber
\end{eqnarray}
the last inequality follows since $t<t'$.\\
%Using the fact that $b^k - a^k\leq (b-a)^k$ for $a\leq b$ and $0<k<1$ we estimate upper bound for \eqref{asmtotitc}. %First we estimate the lower bound
%\begin{eqnarray}
%&&\int_0^t\int_{{\R^d}}[G(t'-r,x-y)-G(t-r,x-y)]^2drdy\nonumber\\
%&\geq&\frac{(2^{\beta d/\a}-1)C^\ast(t'- t)^{1-\beta d/\a}}{1-\beta d/\a}+\frac{C^\ast t^{1-\beta d/\a}}{1-\beta d/\a}-\frac{C^\ast (t')^{1-\beta d/\a}}{1-\beta d/\a}\nonumber\\
%&=&\frac{(2^{\beta d/\a}-1)C^\ast(t'- t)^{1-\beta d/\a}}{1-\beta d/\a}-\frac{C^\ast \left((t')^{1-\beta d/\a}-t^{1-\beta d/\a}\right)}{1-\beta d/\a}\nonumber\\
%&\geq&\frac{(2^{\beta d/\a}-2)C^\ast(t'- t)^{1-\beta d/\a}}{1-\beta d/\a}.\nonumber
%\end{eqnarray}
%Similarly, we estimate the upper bound
%\begin{eqnarray}
%&&\int_0^t\int_{{\R^d}}[G_{t'-r}(x-y)-G_{t-r}(x-y)]^2drdy\nonumber\\
%&\leq&\frac{(2^{\beta d/\a}-1)C^\ast(t'- t)^{1-\beta d/\a}}{1-\beta d/\a}+\frac{C^\ast (t')^{1-\beta d/\a}}{1-\beta d/\a}-\frac{C^\ast t^{1-\beta d/\a}}{1-\beta d/\a}\nonumber\\
%&=&\frac{(2^{\beta d/\a}-1)C^\ast(t'- t)^{1-\beta d/\a}}{1-\beta d/\a}+\frac{C^\ast \left((t')^{1-\beta d/\a}-t^{1-\beta d/\a}\right)}{1-\beta d/\a}\nonumber\\
%&\leq&\frac{2^{\beta d/\a}C^\ast(t'- t)^{1-\beta d/\a}}{1-\beta d/\a}.\nonumber
%\end{eqnarray}
Now we prove (ii). Using $G^\ast_{t-r}(x'-\cdot)- G^\ast_{t-r}(x-\cdot) = e^{i\xi\cdot x'}G^\ast_{t'-r}(\xi)-e^{i\xi\cdot x}G^\ast_{t-r}(\xi)$ and Plancherel theorem, we have
\begin{eqnarray}\label{Eq:GIncre1}
&&\int_0^t\int_{{\R^d}}[G_{t-r}(x'-y)-G_{t-r}(x-y)]^2dydr\nonumber\\
&=&\frac{1}{(2\pi)^d}\int_0^t\int_{{\R^d}}[G^\ast_{t-r}(\xi)]^2\bigg|1 - e^{i\xi(x'-x)}\bigg|^2d\xi dr
\end{eqnarray}
In the following, we divide our proof into two cases:

\noindent Case 1. If $d+2<\min(2,\beta^{-1})\a$, then we apply the following inequality
\begin{equation}\label{Eq:SIncrU3}
\left|1-e^{i\l\xi,x'-x\r}\right|^2=4\sin^2\left(\frac{\l\xi,x'-x\r}{2}\right)\le |\xi|^2|x'-x|^2
\end{equation}
to (\ref{Eq:GIncre1}) to derive that
\begin{eqnarray}
&&\int_0^t\int_{{\R^d}}[G_{t-r}(x'-y)-G_{t-r}(x-y)]^2drdy\nonumber\\
&\leq&\frac{1}{(2\pi)^d}\int_0^t\int_{{\R^d}}[G^\ast_{t-r}(\xi)]^2|\xi|^2|x'-x|^2 d\xi dr\nonumber\\
&=&\frac{|x'-x|^2 2\pi^{d/2}}{(2\pi)^d \Gamma(d/2)}\int_{0}^t\int_0^\infty u^{d+1}(E_\beta(-\nu(t-r)^\beta u^\a))^2dudr\nonumber\\
&\leq&\frac{|x'-x|^22\pi^{d/2}}{(2\pi)^{d} \Gamma(d/2)}\int_0^t\int_0^\infty\frac{u^{d+1}}{(1+\Gamma(1+\beta)^{-1}v(t-r)^{\beta}u^\a)^2}dzdr\nonumber\\
&=&\frac{2\pi^{d/2}\text{B}((d+2)/\a, 2-(d+2)/\a)\nu^{-(d+2)/\a}}{(2\pi)^d\Gamma(d/2)\a(1-(d+2)\beta/\a)\Gamma(1+\beta)^{-(d+2)/\a}}t^{1-(d+2)\beta/\a}|x'-x|^2.\nonumber
\end{eqnarray}
\noindent Case 2. If $d+2\geq\min(2,\beta^{-1})\a$, then we choose $\eps>0$ fixed such that $d+2\eps<\min(2,\beta^{-1})\a$. [This can be done since $d<\min(2,\beta^{-1})\a$.] Clearly, $\eps<1.$
then we apply the following inequality
\begin{equation}\label{Eq:SIncrU5}
\left|1-e^{i\l\xi,x'-x\r}\right|^2\le 4^{1-\eps}|\xi|^{2\eps}|x'-x|^{2\eps}
\end{equation}
to (\ref{Eq:GIncre1}) to derive that
\begin{eqnarray}
&&\int_0^t\int_{{\R^d}}[G_{t-r}(x'-y)-G_{t-r}(x-y)]^2drdy\nonumber\\
&\leq&\frac{4^{1-\eps}}{(2\pi)^d}\int_0^t\int_{{\R^d}}[G^\ast_{t-r}(\xi)]^2|\xi|^{2\eps}|x'-x|^{2\eps} d\xi dr\nonumber\\
&=&\frac{|x'-x|^{2\eps}4^{1-\eps} 2\pi^{d/2}}{(2\pi)^d \Gamma(d/2)}\int_{0}^t\int_0^\infty u^{d+2\eps-1}(E_\beta(-\nu(t-r)^\beta u^\a))^2dudr\nonumber\\
&\leq&\frac{|x'-x|^{2\eps}4^{1-\eps}2\pi^{d/2}}{(2\pi)^{d} \Gamma(d/2)}\int_0^t\int_0^\infty\frac{u^{d+2\eps-1}}{(1+\Gamma(1+\beta)^{-1}v(t-r)^{\beta}u^\a)^2}dzdr\nonumber\\
&=&\frac{2\pi^{d/2}4^{1-\eps}\text{B}((d+2\eps)/\a, 2-(d+2\eps)/\a)\nu^{-(d+2\eps)/\a}}{(2\pi)^d\Gamma(d/2)\a(1-(d+2\eps)\beta/\a)\Gamma(1+\beta)^{-(d+2\eps)/\a}}t^{1-(d+2\eps)\beta/\a}|x'-x|^{2\eps}.\nonumber
\end{eqnarray}

To estimate lower bound we need to use $\frac{1}{4}|z|< |e^z - 1|< \frac{7}{4}|z|$ for $|z|<1$ \cite[Eq. 4.2.38]{milton}. Let $A = \{\xi\in \R^d_+: |\xi| \le x_0 < \frac{1}{|x'-x|}\}.$
\begin{eqnarray}
&&\int_0^t\int_{{\R^d}}[G_{t-r}(x'-y)-G_{t-r}(x-y)]^2dydr\nonumber\\
&=&\frac{1}{(2\pi)^d}\int_0^t\int_{{\R^d}}[G^\ast_{t-r}(\xi)]^2\bigg|1 - e^{i\xi\cdot(x'-x)}\bigg|^2d\xi dr\nonumber\\
&\geq&\frac{1}{4(2\pi)^d}\int_0^t\int_{A}[G^\ast_{t-r}(\xi)]^2|\xi|^2|x'-x|^2 d\xi dr\nonumber\\
&=&c|x'-x|^2\nonumber,
\end{eqnarray}
this is true because by \eqref{uniformbound} $E_\beta(-x)<1$ and using this we have %and using case $d+2<\min(2,\beta^{-1})\a$ and $d+2\geq\min(2,\beta^{-1})\a$ similar to the above proof,
\begin{eqnarray}\int_0^t\int_A[G^\ast_{t-r}(\xi)]^2|\xi|^2d\xi dr&=& \int_0^t\int_{A}(E_\beta\left(-\nu|\xi|^\a(t-r)^\beta\right))^2|\xi|^2d\xi dr \nonumber\\
&\leq&  \int_0^t\int_{A}|\xi|^2d\xi dr \nonumber\\
&\leq& x_{0}^2t|A|,%<\infty %provided $(d+2)\beta/\a<1.
%&\leq&\int_0^t\int_{A}|\xi|^2d\xi dr\nonumber\\
%&\leq&x_0^{2}|A|t,\nonumber
\end{eqnarray}
where $|A|$ is $d-$dimensional volume of $A$.
This completes the proof.
\end{proof}

Our results in this section extend the results in \cite[Chapter 5]{khoshnevisan-cbms} to the time fractional  stochastic heat type equations setting.
Our presentation in this section  follow  the presentation in \cite[Chapter 5]{khoshnevisan-cbms} with many crucial changes.
We next prove that  the linear map $\Phi\to G\circledast\Phi$ is a continuous map from $\mathcal{L}^{\gamma,2}$ into itself. In particular this will show that if $\Phi \in \mathcal{L}^{\gamma,2}$, then $\Psi_t(x):=(G\circledast\Phi)_t(x)$ is also in $\mathcal{L}^{\gamma,2}$, and hence the stochastic convolution $G\circledast\Phi W$ is also a well-defined random field in $\mathcal{L}^{\gamma,2}$.

%Anolog of Theorem 5.1 in Davar's notes
\begin{tm}\label{thm:continuity-stoch-convolution}
If $\Phi \in \mathcal{L}^{\gamma,2}$ for some $\gamma>0$, then $G\circledast\Phi$ has a continuous version that is in $\mathcal{L}^{\gamma,2}$.
\end{tm}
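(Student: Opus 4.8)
The plan is to prove two things: (a) $G\circledast\Phi$ is a well-defined element of $\mathcal{L}^{\gamma,2}$, and (b) it admits a space-time continuous modification, which then represents the same element of $\mathcal{L}^{\gamma,2}$ (that space consisting of a.s.-equivalence classes of random fields). For (a), fix $t>0$ and $x\in\R^d$; as noted before the theorem, $(G\circledast\Phi)_t(x)=\int G_s^{(t,x)}\Phi\,dW$ with $G^{(t,x)}\in L^2(\R_+\times\R^d)$. By the Walsh isometry,
$$\E\bigl[|(G\circledast\Phi)_t(x)|^2\bigr]=\int_0^t\int_{\R^d}[G_{t-s}(y-x)]^2\,\E\bigl[|\Phi_s(y)|^2\bigr]\,ds\,dy.$$
Bounding $\E[|\Phi_s(y)|^2]\le e^{2\gamma s}\mathcal{N}_{\gamma,2}(\Phi)^2$ and using $\int_{\R^d}[G_{t-s}(y-x)]^2\,dy=C^\ast(t-s)^{-\beta d/\a}$ from Lemma~\ref{Lem:Green1}, the change of variable $r=t-s$ gives
$$e^{-2\gamma t}\,\E\bigl[|(G\circledast\Phi)_t(x)|^2\bigr]\le C^\ast\mathcal{N}_{\gamma,2}(\Phi)^2\int_0^\infty r^{-\beta d/\a}e^{-2\gamma r}\,dr=C^\ast\Gamma\bigl(1-\tfrac{\beta d}{\a}\bigr)(2\gamma)^{\frac{\beta d}{\a}-1}\mathcal{N}_{\gamma,2}(\Phi)^2,$$
which is finite since $\beta d/\a<1$ in the standing range $d<\min\{2,\beta^{-1}\}\a$. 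Taking $\sup_{t,x}$ shows $\mathcal{N}_{\gamma,2}(G\circledast\Phi)<\infty$ and, more precisely, that $\Phi\mapsto G\circledast\Phi$ is a bounded linear operator on $\mathcal{L}^{\gamma,2}$.

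For (b) I would first produce the modification when $\Phi$ is a simple random field, so that $\Phi$ is bounded and $\mathcal{N}_{\gamma,p}(\Phi)<\infty$ for every $p\ge2$. The tool is the Burkholder--Davis--Gundy inequality for Walsh integrals (cf.~\cite{khoshnevisan-cbms}): for $p\ge2$ and nonrandom $h$,
$$\left\|\int h\Phi\,dW\right\|_p^2\le c_p\int_0^\infty\int_{\R^d}[h_s(y)]^2\,\|\Phi_s(y)\|_p^2\,ds\,dy,$$
which follows by applying BDG to the martingale $t\mapsto\int_{(0,t)\times\R^d}h\Phi\,dW$ and Minkowski's integral inequality to its quadratic variation. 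Applying this with $h_s(y)=[G_{t'-s}(y-x')-G_{t'-s}(y-x)]\mathbf 1_{(0,t')}(s)$ and Lemma~\ref{DiffEst}(ii) controls the spatial increments; for temporal increments ($t<t'$) one writes $(G\circledast\Phi)_{t'}(x)-(G\circledast\Phi)_t(x)$ as the sum of an integral over $(0,t)\times\R^d$ with kernel $G_{t'-s}(y-x)-G_{t-s}(y-x)$, estimated by Lemma~\ref{DiffEst}(i), and an integral over $(t,t')\times\R^d$ with kernel $G_{t'-s}(y-x)$, estimated by $\int_0^{t'-t}r^{-\beta d/\a}\,dr=(t'-t)^{1-\beta d/\a}/(1-\beta d/\a)$ via Lemma~\ref{Lem:Green1}. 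Bounding $\|\Phi_s(y)\|_p\le e^{\gamma T}\mathcal{N}_{\gamma,p}(\Phi)$ for $s\le T$ and using the triangle inequality in $L^p$, this produces, on any box $[0,T]\times[-M,M]^d$,
$$\E\bigl[|(G\circledast\Phi)_{t'}(x')-(G\circledast\Phi)_t(x)|^p\bigr]\le C_{p,T}\,\mathcal{N}_{\gamma,p}(\Phi)^p\Bigl(|t'-t|^{\frac p2(1-\frac{\beta d}{\a})}+|x'-x|^{\frac p2\eta}\Bigr),\qquad\eta:=\min\Bigl\{\bigl(\tfrac{\a-\beta d}{\beta}\bigr)^{-},2\Bigr\}.$$

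Since $1-\beta d/\a>0$ and $\eta>0$ in the range $d<\min\{2,\beta^{-1}\}\a$, choosing $p$ so large that $\frac p2(1-\frac{\beta d}{\a})>1$ and $\frac p2\eta>d$ makes Kolmogorov's continuity criterion (in the anisotropic form adapted to the different time and space scalings) applicable on each such box, so $G\circledast\Phi$ has a locally H\"older-continuous modification on $\R_+\times\R^d$, which by (a) lies in $\mathcal{L}^{\gamma,2}$. I expect the real obstacle to be the reduction from simple fields to an arbitrary $\Phi\in\mathcal{L}^{\gamma,2}$: part (a) only gives $\mathcal{N}_{\gamma,2}$-convergence $G\circledast\Phi^{(n)}\to G\circledast\Phi$ along simple approximants $\Phi^{(n)}\to\Phi$, and upgrading this to a.s.\ locally uniform convergence of continuous fields (so the limit inherits continuity) needs either that $\Phi$ already have finite moments of all orders --- exactly the situation in the intended application $\Phi=\sigma(u)$ --- or a more careful maximal-inequality plus Borel--Cantelli argument built on the increment estimates above. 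The remaining work --- tracking the two H\"older exponents and verifying their positivity under the dimension restriction --- is routine.
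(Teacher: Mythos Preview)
Your approach is essentially the paper's: the increment bounds you derive via BDG and Lemma~\ref{DiffEst} are exactly Lemmas~\ref{cont-conv-space} and~\ref{cont-conv-time}, your bound in~(a) is the $k=2$ case of Proposition~\ref{younginequality}, and the paper likewise finishes with Kolmogorov's theorem (\cite[Theorem~C.6]{khoshnevisan-cbms}).

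One ingredient the paper supplies that you leave implicit: since $\mathcal{L}^{\gamma,2}$ is the \emph{completion} of simple random fields (Definition~\ref{comp-space}), finiteness of $\mathcal{N}_{\gamma,2}(G\circledast\Phi)$ alone does not place $G\circledast\Phi$ in $\mathcal{L}^{\gamma,2}$. The paper handles this by showing, for each fixed $(t,x)$, that $(\tau,z)\mapsto (G^{(t,x)}\circledast\Phi)_\tau(z)$ is continuous in $L^2(\Omega)$ --- which uses only the $k=2$ case of the increment lemmas, hence only $\Phi\in\mathcal{L}^{\gamma,2}$ --- and then invokes \cite[Proposition~4.6]{khoshnevisan-cbms}, which says that an adapted, $L^2(\Omega)$-continuous random field with finite $\mathcal{N}_{\gamma,2}$-norm belongs to $\mathcal{L}^{\gamma,2}$. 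Your ``bounded linear operator on $\mathcal{L}^{\gamma,2}$'' density argument would close the loop too, but it still needs this fact for simple~$\Phi$, since $G\circledast\Phi$ is not itself simple.

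As for the obstacle you flag --- that Kolmogorov requires $\mathcal{N}_{\gamma,k}(\Phi)<\infty$ for large $k$, not just $k=2$ --- your unease is well placed. The paper simply writes that continuity ``follows from Lemmas~\ref{cont-conv-space} and~\ref{cont-conv-time} by using a suitable form of the Kolmogorov continuity theorem'' and does not spell out a reduction from a general $\Phi\in\mathcal{L}^{\gamma,2}$. In every place the theorem is actually invoked (the Picard iterates of Proposition~\ref{prop:picard} and $\Phi=\sigma(u)$), the field $\Phi$ does satisfy $\mathcal{N}_{\gamma,k}(\Phi)<\infty$ for all $k$, so your proposed resolution --- restrict to $\Phi$ with all moments, which covers the intended application --- is exactly how the paper uses the result in practice.
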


We will prove this theorem in steps.

The following result extends the analogous result on SPDEs by \cite{conus-khoshnevisan,foondun-khoshnevisan-09, mahboubi}.

\begin{prop}\label{younginequality}
(A stochastic Young inequality). For all $\gamma>0, k\in [2,\infty), d<\min\{2, \beta^{-1}\}\alpha$, and $\Phi \in \mathcal{L}^{\gamma,2},$
\begin{equation}\mathcal{N}_{\gamma, k}(G\circledast\Phi)\leq c_0k^{1/2}\cdot \mathcal{N}_{\gamma, k}(\Phi)\nonumber\end{equation}
\\
where $c_0=\sqrt{4(2\gamma)^{-(1-\beta d/\a)}\Gamma(1-\beta d/\a)}$
\end{prop}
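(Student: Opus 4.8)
The plan is to bound the $k$-th moment of the stochastic convolution by combining the Walsh isometry with the Burkholder--Davis--Gundy inequality, and then to convert the resulting space-time integral into a convolution-type bound on which a Gronwall/Young argument can be run through the weighted norm $\mathcal{N}_{\gamma,k}$. First I would fix $t>0$ and $x\in\R^d$ and write $(G\circledast\Phi)_t(x)=\int G_s^{(t,x)}\Phi\,dW$, which is a well-defined Walsh--Dalang integral since $G^{(t,x)}\in L^2(\R_+\times\R^d)$ by Lemma \ref{Lem:Green1} and $\Phi\in\mathcal{L}^{\gamma,2}$. The stochastic integral is a martingale in $t$ (with the Brownian filtration), so by BDG there is a universal constant (which one can take of order $\sqrt{k}$, giving the factor $k^{1/2}$ — this is the standard Carlen--Kree / Davis constant for the $L^k$ norm of a continuous martingale) such that
\begin{equation}
\left\| (G\circledast\Phi)_t(x)\right\|_k^2 \le 4k \int_0^t\int_{\R^d} G_{t-s}(y-x)^2\,\bigl\| \Phi_s(y)\bigr\|_k^2\,dy\,ds.\nonumber
\end{equation}

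Next I would insert the definition of $\mathcal{N}_{\gamma,k}$: for every $s$ and $y$ we have $\|\Phi_s(y)\|_k\le e^{\gamma s}\mathcal{N}_{\gamma,k}(\Phi)$, so the right-hand side is at most
\begin{equation}
4k\,\mathcal{N}_{\gamma,k}(\Phi)^2\int_0^t e^{2\gamma s}\int_{\R^d} G_{t-s}(y-x)^2\,dy\,ds
= 4k\,\mathcal{N}_{\gamma,k}(\Phi)^2\int_0^t e^{2\gamma s}\,C^\ast (t-s)^{-\beta d/\a}\,ds,\nonumber
\end{equation}
using the spatial integral $\int_{\R^d}G_{t-s}(y-x)^2\,dy=C^\ast(t-s)^{-\beta d/\a}$ from Lemma \ref{Lem:Green1} (valid because $d<2\a$, which is implied by $d<\min\{2,\beta^{-1}\}\a$). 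Substituting $r=t-s$, the time integral becomes $e^{2\gamma t}\int_0^t e^{-2\gamma r} r^{-\beta d/\a}\,dr \le e^{2\gamma t}\int_0^\infty e^{-2\gamma r} r^{-\beta d/\a}\,dr = e^{2\gamma t}(2\gamma)^{-(1-\beta d/\a)}\Gamma(1-\beta d/\a)$, where the Gamma integral is finite precisely because $\beta d/\a<1$, i.e. $d<\beta^{-1}\a$. Therefore $e^{-2\gamma t}\|(G\circledast\Phi)_t(x)\|_k^2 \le 4k\,(2\gamma)^{-(1-\beta d/\a)}\Gamma(1-\beta d/\a)\,\mathcal{N}_{\gamma,k}(\Phi)^2$; taking the supremum over $t\ge 0$ and $x\in\R^d$ and then the square root gives $\mathcal{N}_{\gamma,k}(G\circledast\Phi)\le c_0 k^{1/2}\mathcal{N}_{\gamma,k}(\Phi)$ with $c_0=\sqrt{4(2\gamma)^{-(1-\beta d/\a)}\Gamma(1-\beta d/\a)}$, as claimed.

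The only genuinely delicate points are bookkeeping ones. First, one must be careful that the BDG constant really can be taken to be $\le 4k$ after squaring (equivalently $\le 2\sqrt{k}$ before squaring) uniformly in $k\in[2,\infty)$; this is exactly the constant used in \cite{khoshnevisan-cbms} and can be quoted. Second, to apply BDG one needs $\Phi$ to be a genuine element of $\mathcal{L}^{\gamma,2}$ (or a simple field, then pass to the limit), so the cleanest exposition first proves the inequality for simple $\Phi$ and then extends by density using that $\Phi\mapsto G\circledast\Phi$ is a continuous linear map in the $\mathcal{N}_{\gamma,2}$ norm — but since the bound we are proving is precisely that continuity statement, it is enough to establish it for simple $\Phi$ and invoke the completion. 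I expect no real obstacle beyond making sure the exponential weight is handled by extending the inner time integral from $[0,t]$ to $[0,\infty)$ before evaluating the Gamma function, which is what produces the clean constant $c_0$.
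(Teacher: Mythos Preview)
Your proof is correct and follows essentially the same route as the paper: apply the BDG inequality with constant $4k$ (quoting \cite[Proposition~4.4]{khoshnevisan-cbms}), insert $\|\Phi_s(y)\|_k\le e^{\gamma s}\mathcal{N}_{\gamma,k}(\Phi)$, use Lemma~\ref{Lem:Green1} to evaluate the spatial integral as $C^\ast(t-s)^{-\beta d/\a}$, change variables and extend the time integral to $[0,\infty)$ to produce the Gamma factor, then divide by $e^{2\gamma t}$ and optimize. The only cosmetic slip is that the constant $C^\ast$ from Lemma~\ref{Lem:Green1} is silently dropped in your final displayed bound (the paper's stated $c_0$ omits it as well), so strictly speaking $c_0$ should carry a factor $\sqrt{C^\ast}$.
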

\begin{proof}
According to the BDG inequality in \cite[Proposition 4.4]{khoshnevisan-cbms},  and Lemma \ref{Lem:Green1}, for all $k\geq 2,$
\begin{eqnarray}
||(G\circledast \Phi)_t(x)||_k^2&\leq& 4k\int_0^tds\int_{\R^d}dy[G_{t-s}(y-x)]^2||\Phi(y)||_k^2\nonumber\\
&\le&4k[\mathcal{N}_{\gamma,k}(\Phi)]^2\int_0^te^{2\gamma s}ds\int_{\R^d}[G_{t-s}(y-x)]^2 dy\nonumber\\
&=& 4kC^\ast[\mathcal{N}_{\gamma,k}(\Phi)]^2\int_0^t e^{2\gamma s}(t-s)^{-\beta d/\a}ds\nonumber\\
&=&4kC^\ast [\mathcal{N}_{\gamma,k}(\Phi)]^2e^{2\gamma t}\int_0^t e^{-2\gamma u}u^{-\beta d/\a}du\nonumber\\
&\leq& 4kC^\ast [\mathcal{N}_{\gamma,k}(\Phi)]^2e^{2\gamma t}(2\gamma)^{-(1-\beta d/\a)}\Gamma(1-\beta d/\a).
\end{eqnarray}
Divide both sides by $\exp(2\gamma t)$, take supremum over all $(t,x)$, and then take square roots to finish.
\end{proof}
\begin{lem}\label{cont-conv-space}
There exists a finite universal constant $C_1$ such that for all $\gamma>0, k\in [1,\infty), t>0, \Phi\in\mathcal{L}^{\gamma,2}$, $d<\min\{2, \beta^{-1}\}\alpha$ and $x, x'\in\R
^d,$
$$\mathbb{E}\bigg(\big|(G\circledast \Phi)_t(x)-(G\circledast \Phi)_t(x')\big|^k\bigg)\leq (C_1k)^{k/2}e^{\gamma kt}[\mathcal{N}_{\gamma,k}(\Phi)]^k\cdot|x'-x|^{\min\left\{\left(\frac{\a-\beta d}{\beta}\right)^{-},2\right\}\frac k2}.$$
\end{lem}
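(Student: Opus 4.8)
The plan is to reproduce the argument of Proposition \ref{younginequality} essentially verbatim, with the kernel $G_{t-s}(y-x)$ replaced by the spatial increment $G_{t-s}(y-x)-G_{t-s}(y-x')$ and with Lemma \ref{DiffEst}(ii) used in place of Lemma \ref{Lem:Green1}. First I would use the linearity of the Walsh--Dalang integral recalled in Section 2 to write
\[
(G\circledast\Phi)_t(x)-(G\circledast\Phi)_t(x')=\int_{(0,t)\times\R^d}\big[G_{t-s}(y-x)-G_{t-s}(y-x')\big]\Phi_s(y)\,W(ds\,dy),
\]
and to observe that this stochastic integral is well defined: $\Phi\in\mathcal{L}^{\gamma,2}$, while by Lemma \ref{DiffEst}(ii) the deterministic kernel $(s,y)\mapsto G_{t-s}(y-x)-G_{t-s}(y-x')$ belongs to $L^2((0,t)\times\R^d)$.

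Next, assuming first that $k\ge 2$, I would apply the Burkholder--Davis--Gundy inequality \cite[Proposition 4.4]{khoshnevisan-cbms} as in Proposition \ref{younginequality}, then bound $\|\Phi_s(y)\|_k\le e^{\gamma s}\mathcal{N}_{\gamma,k}(\Phi)$ and use $e^{2\gamma s}\le e^{2\gamma t}$ on $[0,t]$, to obtain
\[
\big\|(G\circledast\Phi)_t(x)-(G\circledast\Phi)_t(x')\big\|_k^2\le 4k\,e^{2\gamma t}[\mathcal{N}_{\gamma,k}(\Phi)]^2\int_0^t ds\int_{\R^d}dy\,\big[G_{t-s}(x-y)-G_{t-s}(x'-y)\big]^2,
\]
where I used the radial symmetry $G_r(z)=G_r(-z)$. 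By Lemma \ref{DiffEst}(ii) the last integral is at most $C|x'-x|^{\min\{((\alpha-\beta d)/\beta)^-,\,2\}}$; raising both sides to the power $k/2$ and setting $C_1:=4C$ then yields the claim for $k\ge 2$. For $k\in[1,2)$ the bound follows from the case $k=2$ via Jensen's inequality, $\mathbb{E}(|Z|^k)\le(\mathbb{E}|Z|^2)^{k/2}$, together with the fact that $\mathcal{N}_{\gamma,2}$ dominates $\mathcal{N}_{\gamma,k}$, after possibly enlarging $C_1$.

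I do not expect a serious obstacle here: the genuinely hard input -- the correct H\"older-type spatial modulus of the kernel $G$, with exponent $\min\{((\alpha-\beta d)/\beta)^-,2\}$ -- has already been isolated in Lemma \ref{DiffEst}(ii), and everything else is the same BDG-plus-norm bookkeeping that underlies the stochastic Young inequality. The only points requiring mild care are the justification that the difference of stochastic convolutions equals the stochastic convolution against the kernel difference (linearity, plus the $L^2$ bound from Lemma \ref{DiffEst}(ii) to make the integral meaningful), the tracking of the explicit constant through the BDG step, and the passage to the range $k<2$.
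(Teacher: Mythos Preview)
Your argument is correct and essentially identical to the paper's own proof: write the spatial increment as a single Walsh--Dalang integral against the kernel difference, apply the BDG inequality \cite[Proposition~4.4]{khoshnevisan-cbms}, bound $\|\Phi_s(y)\|_k^2\le e^{2\gamma t}[\mathcal{N}_{\gamma,k}(\Phi)]^2$ on $[0,t]$, and finish with Lemma~\ref{DiffEst}(ii). One small caveat on the extra step you add: the paper's proof does not separately treat $k\in[1,2)$, and your Jensen reduction for that range yields $[\mathcal{N}_{\gamma,2}(\Phi)]^k$ on the right, which, since $\mathcal{N}_{\gamma,2}\ge\mathcal{N}_{\gamma,k}$ for $k<2$, is slightly weaker than the stated bound with $[\mathcal{N}_{\gamma,k}(\Phi)]^k$ --- this is a wrinkle in the lemma's formulation rather than in your strategy.
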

\begin{proof}
We need to only consider when $\mathcal{N}_{\gamma, k}(\Phi)<\infty,$ a condition that we now assumed. In that case we may apply the BDG inequality to obtain the following
\begin{eqnarray}
||(G\circledast\Phi)_t(x) &-& (G\circledast\Phi)_t(x')||_k^2\nonumber\\
&=&\left\|\int_{(0,t)\times\R^d}[G_{t-s}(y-x)-G_{t-s}(y-x')]\Phi_s(y)W(dsdy)\right\|\nonumber\\
&\leq& c_0k\int_0^tds\int_{\R^d}dy[G_{t-s}(y-x)-G_{t-s}(y-x')]^2\|\Phi_s(y)\|_k^2\nonumber
\end{eqnarray}
%The use of martingale inequalities

Now let us observe that whenever $0<s<t,$
$$\|\Phi_s(y)\|_k^2\leq e^{2\gamma s}[\mathcal{N}_{\gamma,k}(\Phi)]^2\leq e^{2\gamma t}[\mathcal{N}_{\gamma,k}(\Phi)]^2.$$
This and Lemma \ref{DiffEst} gives
\begin{eqnarray}
&&||(G\circledast\Phi)_t(x)- (G\circledast\Phi)_t(x')||_k^2\nonumber\\
&\leq& c_0ke^{2\gamma t}[\mathcal{N}_{\gamma,k}(\Phi)]^2\int_0^tds\int_{\R^d}[G_{t-s}(y-x)-G_{t-s}(y-x')]^2dy\nonumber\\
&\leq&Cc_0ke^{2\gamma t}[\mathcal{N}_{\gamma,k}(\Phi)]^2|x'-x|^{\min\left\{\left(\frac{\a-\beta d}{\beta}\right)^{-},2\right\}}.\nonumber
\end{eqnarray}
Raise both sides to the power of $k/2$ to finish.
\end{proof}

\begin{lem}\label{cont-conv-time}
There exists a finite universal constant $C_1$ such that for every $\gamma >0, k\in[1,\infty],t,t'>0, x\in \R^d$, $d<\min\{2, \beta^{-1}\}\alpha$ and $\Phi\in\mathcal{L}^{\gamma,2},$ then
$$\mathbb{E}\bigg(\big|(G\circledast \Phi)_t(x)-(G\circledast \Phi)_{t'}(x)\big|^k\bigg)\leq (C_1k)^{k/2}e^{\gamma k t}[\mathcal{N}_{\gamma,k}(\Phi)]^k\cdot|t-t'|^{(1-\beta d/\a)k/2}.$$
\end{lem}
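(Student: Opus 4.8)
The plan is to run the argument of Lemma~\ref{cont-conv-space} with the spatial increment of the kernel replaced by a temporal one, the new feature being that the time integral must be split into an ``overlap'' part and a ``residual'' part. Since the left-hand side and $|t-t'|$ are symmetric in $t$ and $t'$, I would assume without loss of generality that $t'\le t$, and (as in the previous lemmas) treat only the case $\mathcal{N}_{\gamma,k}(\Phi)<\infty$. With the notation $G_s^{(t,x)}(y):=G_{t-s}(y-x)\mathbf{1}_{(0,t)}(s)$ from Section~2, the difference $(G\circledast\Phi)_t(x)-(G\circledast\Phi)_{t'}(x)$ equals the single Walsh--Dalang integral $\int\big[G_s^{(t,x)}(y)-G_s^{(t',x)}(y)\big]\Phi_s(y)\,W(ds\,dy)$; this integral is legitimate because $G^{(t,x)}$ and $G^{(t',x)}$ both lie in $L^2(\R_+\times\R^d)$ by the computation following Definition~\ref{comp-space}, and $\Phi\in\mathcal{L}^{\gamma,2}$. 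Applying the BDG inequality of \cite[Proposition 4.4]{khoshnevisan-cbms} exactly as in Proposition~\ref{younginequality} gives, for a universal constant $c$,
$$\big\|(G\circledast\Phi)_t(x)-(G\circledast\Phi)_{t'}(x)\big\|_k^2\le ck\int_0^t ds\int_{\R^d}dy\,\big[G_s^{(t,x)}(y)-G_s^{(t',x)}(y)\big]^2\,\|\Phi_s(y)\|_k^2.$$

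Next I would use $\|\Phi_s(y)\|_k^2\le e^{2\gamma s}[\mathcal{N}_{\gamma,k}(\Phi)]^2\le e^{2\gamma t}[\mathcal{N}_{\gamma,k}(\Phi)]^2$ for $0<s<t$, factor this out, and decompose $\int_0^t=\int_0^{t'}+\int_{t'}^t$. On $(0,t')$ one has $G_s^{(t,x)}(y)-G_s^{(t',x)}(y)=G_{t-s}(y-x)-G_{t'-s}(y-x)$, so Lemma~\ref{DiffEst}(i), applied with $(t',t)$ in the roles of $(t,t')$ there, bounds $\int_0^{t'}\!\int_{\R^d}[G_{t-s}(y-x)-G_{t'-s}(y-x)]^2\,dy\,ds$ by $\frac{C^\ast}{1-\beta d/\a}(t-t')^{1-\beta d/\a}$. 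On $(t',t)$ the indicator $\mathbf{1}_{(0,t')}$ vanishes, so the integrand reduces to $[G_{t-s}(y-x)]^2$, and the substitution $u=t-s$ together with Lemma~\ref{Lem:Green1} (and translation invariance of Lebesgue measure) gives $\int_{t'}^t\!\int_{\R^d}[G_{t-s}(y-x)]^2\,dy\,ds=\int_0^{t-t'}C^\ast u^{-\beta d/\a}\,du=\frac{C^\ast}{1-\beta d/\a}(t-t')^{1-\beta d/\a}$, which is finite since $\beta d/\a<1$. Summing the two contributions,
$$\big\|(G\circledast\Phi)_t(x)-(G\circledast\Phi)_{t'}(x)\big\|_k^2\le \frac{2cC^\ast}{1-\beta d/\a}\,k\,e^{2\gamma t}[\mathcal{N}_{\gamma,k}(\Phi)]^2\,(t-t')^{1-\beta d/\a},$$
and raising both sides to the power $k/2$ yields the assertion with $C_1:=2cC^\ast/(1-\beta d/\a)$; for $1\le k<2$ one first reduces to $k=2$ via $\|\cdot\|_k\le\|\cdot\|_2$ (Jensen's inequality).

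The proof presents no genuine difficulty; the only point that requires attention is the splitting of the time integral. The overlap piece on $(0,t')$ is precisely what Lemma~\ref{DiffEst}(i) was built to handle, while on the residual piece $(t',t)$ one must notice that the $G_{t'-s}$ term disappears and that the leftover $\int_0^{t-t'}\!\int_{\R^d}G_u(y)^2\,dy\,du$ still produces the same exponent $1-\beta d/\a$ of $(t-t')$ rather than a smaller power, so that the two pieces combine without loss. It is also worth recording the membership $G_s^{(t,x)}-G_s^{(t',x)}\in L^2(\R_+\times\R^d)$ that justifies the stochastic integral and the BDG bound, together with the constant $c$ coming from \cite[Proposition 4.4]{khoshnevisan-cbms}; both are immediate from Lemma~\ref{Lem:Green1} and the discussion following Definition~\ref{comp-space}. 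Finally, under the convention $t'\le t$ adopted at the outset, $e^{2\gamma t}=e^{2\gamma(t\vee t')}$, so after taking $k/2$ powers the prefactor is $e^{\gamma k t}$, matching the statement.
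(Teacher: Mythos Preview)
Your proof is correct and follows essentially the same route as the paper: split the time interval into the overlap $(0,t\wedge t')$ and the residual, apply the BDG inequality together with Lemma~\ref{DiffEst}(i) on the overlap and Lemma~\ref{Lem:Green1} on the residual, and raise to the power $k/2$. The only cosmetic difference is that the paper decomposes $(G\circledast\Phi)_t(x)-(G\circledast\Phi)_{t'}(x)=J_1+J_2$ into two stochastic integrals and applies BDG to each separately, whereas you apply BDG once and split the resulting deterministic integral; your version is marginally tidier (it avoids the extra factor~$2$ from squaring the triangle inequality) but the argument is the same.
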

\begin{proof}
Without loss of generality, we suppose that $0<t<t'$ and $\mathcal{N}_{\gamma,k}(\Phi)<\infty.$ In that case, we may write
$$(G\circledast \Phi)_t(x)-(G\circledast \Phi)_{t'}(x) = J_1 + J_2,$$
where
$$J_1 := \int_{(0,t)\times\R^d} [G_{t-s}(y-x)-G_{t'-s}(y-x)]\Phi_s(y)W(dsdy),$$
and $$J_2 := \int_{(t,t')\times\R^d} G_{t'-s}(y-x)\Phi_s(y)W(dsdy).$$
We apply the BDG inequality and Lemma \ref{Lem:Green1} and \ref{DiffEst} to obtain the bound
\begin{eqnarray}
\|J_1\|_k^2&\leq&c_0k\int_0^tds\int_{\R^d}dy[G_{t-s}(y-x)-G_{t'-s}(y-x)]^2\|\Phi_s(y)\|_k^2\nonumber\\
&\leq&Cke^{2\gamma t}[\mathcal{N}_{\gamma,k}(\Phi)]^2(t'-t)^{1-\beta d/\a}\nonumber
\end{eqnarray}
and
\begin{eqnarray}
\|J_2\|_k^2&\leq&c_0k\int_t^{t'}ds\int_{\R^d}dy[G_{t'-s}(y-x)]^2\|\Phi_s(y)\|_k^2\nonumber\\
&\leq&Cke^{2\gamma t'}[\mathcal{N}_{\gamma,k}(\Phi)]^2\int_t^{t'}(t'-s)^{-\beta d/\a}ds\nonumber\\
&\leq&Cke^{2\gamma {t'}}[\mathcal{N}_{\gamma,k}(\Phi)]^2(t'-t)^{1-\beta d/\a}.\nonumber
\end{eqnarray}
In this way we obtain the bound
\begin{eqnarray}
\|(G\circledast\Phi)_t(x) - (G\circledast\Phi)_{t'}(x)\|_k&\leq& 2\|J_1\|_k^2 + 2\|J_2\|_k^2\nonumber\\
&\leq& Cke^{2\gamma {t}}[\mathcal{N}_{\gamma,k}(\Phi)]^2(t'-t)^{1-\beta d/\a}\nonumber,
\end{eqnarray}
where $C$ is a finite constant. Hence, we deduce the lemma after we raise both sides of the preceding display to the power $k/2.$
\end{proof}

In the following, we consider
 \begin{equation}\label{Eq:SolL}
 U_t(x)=\int_0^t\int_{\R^d}G_{t-r}(x-y)W(drdy).
 \end{equation}
This is the random part of the mild solution to \eqref{tfspde} when $\sigma\equiv 1$.

\begin{prop}\label{Prop:MomentEst1}
Suppose $d<\min(2,\beta^{-1})\a$, and $k\ge 2$, then we have the following moment estimates for time increments and spatial increments, respectively.

\noindent (i). For $t\le t'$, we have
\begin{equation}\label{Eq:TIncr1}
c^{-1}_{_9}|t'-t|^{\left(1-\frac{\beta d}{\a}\right)\frac k2}\le \E\left[|U_{t'}(x)-U_{t}(x)|^k\right]\le c_{_9}|t'-t|^{\left(1-\frac{\beta d}{\a}\right)\frac k2}.
\end{equation}
(ii). For  $x,\,x'\in{\R^d},$ we have
\begin{equation}\label{Eq:SIncr}
c|x'-x|^k\le\E\left[|U_{t}(x)-U_t(x')|^k\right]\le c_{_{10}}|x-x'|^{\min\left\{\left(\frac{\a-\beta d}{\beta}\right)^{-},2\right\}\frac k2}.
\end{equation}
\end{prop}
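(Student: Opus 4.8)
The plan is to exploit the Gaussian nature of $U$. Since $G^{(t,x)}\in L^2(\R_+\times\R^d)$ whenever $d<\min\{2,\beta^{-1}\}\a$ (by Lemma \ref{Lem:Green1}), each $U_t(x)$ is a Wiener integral of a deterministic kernel against the white noise $W$, hence a centered Gaussian random variable; the same is true of every increment $Z$ of the form $U_{t'}(x)-U_t(x)$ or $U_t(x)-U_t(x')$. Consequently $\E[|Z|^k]=m_k\,(\E[Z^2])^{k/2}$, where $m_k:=\E[|N(0,1)|^k]$ depends only on $k$. It therefore suffices to prove both displays for $k=2$ and then raise to the power $k/2$.

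For part (i) I would fix $x$ and $0<t<t'$ and split along the disjoint time strips $(0,t)$ and $(t,t')$,
\begin{equation}
U_{t'}(x)-U_t(x)=A+B,\qquad B:=\int_{(t,t')\times\R^d}G_{t'-s}(y-x)\,W(ds\,dy),\nonumber
\end{equation}
with $A:=\int_{(0,t)\times\R^d}[G_{t'-s}(y-x)-G_{t-s}(y-x)]\,W(ds\,dy)$. Since $A$ and $B$ are stochastic integrals over disjoint time intervals, they are orthogonal (indeed independent) centered Gaussians, so $\E[(U_{t'}(x)-U_t(x))^2]=\E[A^2]+\E[B^2]$. I would then use the Walsh isometry with Lemma \ref{DiffEst}(i) to get $\E[A^2]\le C^\ast(1-\beta d/\a)^{-1}(t'-t)^{1-\beta d/\a}$, and the Walsh isometry with Lemma \ref{Lem:Green1} together with the substitution $u=t'-s$ to compute $\E[B^2]=\int_0^{t'-t}C^\ast u^{-\beta d/\a}\,du=C^\ast(1-\beta d/\a)^{-1}(t'-t)^{1-\beta d/\a}$. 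Adding these gives the upper bound in (i); discarding the nonnegative term $\E[A^2]$ gives $\E[(U_{t'}(x)-U_t(x))^2]\ge\E[B^2]$, the matching lower bound. Raising to the power $k/2$ and multiplying by $m_k$ finishes (i), with $c_{_9}$ depending only on $k,\beta,d,\a,\nu$.

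For part (ii) the Walsh isometry gives directly
\begin{equation}
\E\big[(U_t(x)-U_t(x'))^2\big]=\int_0^t\!\!\int_{\R^d}[G_{t-s}(x-y)-G_{t-s}(x'-y)]^2\,dy\,ds,\nonumber
\end{equation}
and both the lower bound $c|x'-x|^2$ and the upper bound $C|x'-x|^{\min\left\{\left(\frac{\a-\beta d}{\beta}\right)^{-},2\right\}}$ are precisely the two-sided estimate already proved in Lemma \ref{DiffEst}(ii). Raising to the power $k/2$ and invoking the Gaussian moment identity once more yields (ii).

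I do not expect a genuine obstacle: the analytic heart of the statement is already contained in Lemmas \ref{Lem:Green1} and \ref{DiffEst}. The one point that needs care is the orthogonal decomposition $U_{t'}(x)-U_t(x)=A+B$, since this is what produces a \emph{sharp} lower bound for the time increment rather than merely an upper bound (the upper bounds alone could also be extracted from Lemmas \ref{cont-conv-time} and \ref{cont-conv-space} applied with $\Phi\equiv 1$). A secondary bookkeeping point is to confirm that $m_k$, and hence $c_{_9},c_{_{10}}$, does not depend on the space-time points.
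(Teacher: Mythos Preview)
Your proof is correct and structurally mirrors the paper's: the same orthogonal decomposition $U_{t'}(x)-U_t(x)=A+B$ over the strips $(0,t)$ and $(t,t')$, the same appeal to Lemmas \ref{Lem:Green1} and \ref{DiffEst}, and the same way of obtaining the lower bound by discarding $\E[A^2]$. The one genuine difference is the passage from $k=2$ to general $k$. The paper does this asymmetrically, using Jensen's inequality $\E[|Z|^k]\ge(\E[Z^2])^{k/2}$ for the lower bounds and the Burkholder--Davis--Gundy inequality for the upper bounds; you instead observe that each increment is a Wiener integral of a \emph{deterministic} kernel and hence centered Gaussian, so that $\E[|Z|^k]=m_k(\E[Z^2])^{k/2}$ exactly. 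Your route is cleaner and yields explicit, matching constants on both sides, whereas the paper's BDG argument would still work if the integrand were random (as in Lemmas \ref{cont-conv-time} and \ref{cont-conv-space}) but is unnecessary generality here.
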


\begin{proof}%[Proof of Proposition \ref{Prop:MomentEst1}]
We prove the lower bound in (i) at first. By the H\"older's inequality, the fact that the stochastic integrals over nonoverlapping time intervals are independent, isometry of the stochastic integrals and the proof of Lemma \ref{DiffEst}, we have that for $k\ge 2,$ $t\le t'$
\begin{equation}\label{Eq:TIncrL1}
\begin{split}
&\E\left[|U_{t'}(x)-U_t(x)|^k\right]\ge \left(\E\left[|U_{t'}(x)-U_t(x)|^2\right]\right)^{k/2}\\
&=\left(\E\left|\int_0^t\int_{\R^d}[G_{t'-r}(x-y)-G_{t-r}(x-y)]W(dydr)
+\int_t^{t'}\int_{\R^d}G_{t'-r}(x-y)W(dydr)\right|^2\right)^{k/2}\\
&=\left(\int_0^t\int_{\R^d}[G_{t'-r}(x-y)-G_{t-r}(x-y)]^2dydr+\int_t^{t'}\int_{\R^d}G^2_{t'-r}(x-y)dydr\right)^{k/2}\\
%&\ge\left(\frac{(2^{\beta d/\a}-2)C^\ast(t'- t)^{1-\beta d/\a}}{1-\beta d/\a}+\frac{C^\ast(t'- t)^{1-\beta d/\a}}{1-\beta d/\a}\right)^{p/2}\\
&\ge\left(\frac{C^\ast}{1-\beta d/\a}\right)^{k/2}|t'-t|^{\left(1-\frac{\beta d}{\a}\right)\frac k2}.
\end{split}
\end{equation}
Now, for the upper bound, applying the Burkholder's inequality, the fact that $|a+b|^k\le 2^k\left(|a|^k+|b|^k\right),$ and Lemma \ref{DiffEst} we have that
\begin{equation}\label{Eq:TIncrU}
\begin{split}
&\E\left[|U_{t'}(x)-U_t(x)|^k\right]\\
&\le c_{_{2}}\left(\int_0^t\int_{\R^d}[G_{t'-r}(x-y)-G_{t-r}(x-y)]^2dydr\right)^{k/2}\nonumber\\
&\quad+c_{_{2}}\left(\int_t^{t'}\int_{\R^d}G^2_{t'-r}(x-y)dydr\right)^{k/2}\nonumber\\
&\le c_{2}\left(\frac{C^\ast(t'- t)^{1-\beta d/\a}}{1-\beta d/\a}\right)^{k/2} + c_{2}\left(\frac{C^\ast(t'- t)^{1-\beta d/\a}}{1-\beta d/\a}\right)^{k/2}\nonumber\\
&=2c_{2}\left(\frac{C^\ast}{1-\beta d/\a}\right)^{k/2}|t'-t|^{\left(1-\frac{\beta d}{\a}\right)\frac k2}.
\end{split}
\end{equation}
We now prove (ii). For $p\ge 2$ with $x,\,x'\in\R^d$, by the Burkholder's inequality and Lemma \ref{DiffEst}, we have
\begin{eqnarray}\label{Eq:SIncrU1}
\E\left[|U_{t}(x')-U_{t}(x)|^k\right]
&\le& c_{_{3}}\left(\int_0^t\int_{\R^d}[G_{t-r}(x'-y)-G_{t-r}(x-y)]^2dydr
\right)^{k/2}\nonumber\\
&=& C|x-x'|^{\min\left\{\left(\frac{\a-\beta d}{\beta}\right)^{-},2\right\}\frac k2}.\nonumber
\end{eqnarray}
To prove the lower bound we use the fact that $k\geq 2$ and Lemma \ref{DiffEst}(ii)
\begin{eqnarray}
&\E\left[|U_t(x)-U_t(x')|^k\right]\ge \left(\E\left[|U_t(x)-U_{t}(x')|^2\right]\right)^{k/2}\nonumber\\
&=\left(\int_0^t\int_{\R^d}[G_{t-r}(x'-y)-G_{t-r}(x-y)]^2dydr\right)^{k/2}\nonumber\\
&\geq c|x'-x|^k.\nonumber
\end{eqnarray}

\end{proof}

\begin{proof}[{\bf PROOF OF THEOREM \ref{thm:continuity-stoch-convolution}.}]
If $\Phi\in \mathcal{L}^{\gamma,2}$ for some $\gamma>0$, then $G\circledast \Phi$ is an adapted random field. This can be seen by considering simple random fields $\Phi$, as limits of adapted random fields are adapted. But the result is easy to deduce when $\Phi$ is simple.

Lemma \ref{cont-conv-space} and Lemma \ref{cont-conv-time} shows that $G\circledast \Phi$ is continuous in $L^2(\Omega)$, then  Proposition 4.6 in \cite{khoshnevisan-cbms} implies that $G\circledast \Phi$ is in  $\mathcal{L}^{\gamma,2}$. We give some details of the preceding.

Let us choose and fix $t>0$ and $x\in \rd$. We need to show that $(\tau,z)\to (G^{(t,x)}\circledast \Phi)_\tau(z) $ is continuous in $L^2(\Omega)$ to show that  $G\circledast \Phi$ is in  $\mathcal{L}^{\gamma,2}$.

Since
\begin{equation}\
\begin{split}
 \ & \mathbb{E}\bigg(\big|(G^{(t,x)}\circledast \Phi)_\tau(z)-(G^{(t,x)}\circledast \Phi)_\tau(z')\big|^2\bigg)\\
&\ \ \ \ =\mathbb{E}\bigg(\big|(G^{(t,0)}\circledast \Phi)_\tau(z+x)-(G^{(t,0)}\circledast \Phi)_\tau(z'+x)\big|^2\bigg),
\end{split}
\end{equation}
 the proof of  Lemma \ref{cont-conv-space} ensures that the preceding tends to zero, uniformly on $(\tau, z, z')\in (0,t)\times(-n, n)^d\times (-n, n)^d$ for any $n>0$ fixed, as $|z-z'|\to 0$. This shows uniform continuity in $L^2(\Omega)$, in the space variable.

  On the other hand, we may follow the steps in the proof of Lemma \ref{cont-conv-time}  to deduce that
  \begin{equation}\
\begin{split}
 \ & \mathbb{E}\bigg(\big|(G^{(t,x)}\circledast \Phi)_\tau(z)-(G^{(t,x)}\circledast \Phi)_{\tau'}(z)\big|^2\bigg)\\
&\ \ \ \ =\int_0^{\tau'-\tau}dr\int_{\rd} dy [G_{t-s}(y-z-x)]^2\E\bigg(|\Phi_s(y)|^2\bigg)
\end{split}
\end{equation}
   for $0<\tau<\tau'$. Then we get
    \begin{equation}\
\begin{split}
 \ & \mathbb{E}\bigg(\big|(G^{(t,x)}\circledast \Phi)_\tau(z)-(G^{(t,x)}\circledast \Phi)_{\tau'}(z)\big|^2\bigg)\\
&\ \ \ \ =[\mathcal{N}_{\gamma, 2}(\Phi)]^2\int_0^{\tau'-\tau}e^{2\gamma s}dr\int_{\rd} dy [G_{t-s}(y-z-x)]^2.
\end{split}
\end{equation}
The preceding quantity goes to zero, as $\tau'-\tau \to 0$, uniformly for all $0<\tau<\tau'<t$, and $z\in \rd$. This proves the remaining $L^2$-continuity in the time variable, and completes the proof of the fact that $G\circledast \Phi \in \mathcal{L}^{\gamma,2}$.

The continuity of a modification of  the stochastic convolution $G\circledast \Phi$ follows from Lemmas \ref{cont-conv-space}
 and \ref{cont-conv-time} by using a suitable form of the Kolmogorov continuity theorem \cite[Theorem C.6]{khoshnevisan-cbms}
 \end{proof}
\section{Existence and uniqueness of  solutions to the time fractional SPDEs}\label{convolution}
Recall that $\sigma:\R\to\R$ is Lipchitz continuous. This means that there exists $Lip>0$ such that
\begin{equation}\label{Eq:Cond-sigma}
|\sigma(x)-\sigma(y)|\le Lip|x-y|\quad\mbox{for all }\,\,x,\,y\in\R.
\end{equation}
We may assume, without loss of generality, that $Lip$ is also greater than $|\sigma(0)|$. Since $|\sigma(x)|\le |\sigma(0)|+Lip |x|$, it follows that $|\sigma(x)|\le Lip (1+|x|)$ for all $x\in \R$.

The next theorem establishes existence and uniqueness of  mild solutions of \eqref{tfspde}. It is our first main result in this paper.
%Analog of Theorem 5.5. in Davar's notes!
\begin{tm}\label{thm:existence-moment}
Let $d<\min \{2, \beta^{-1}\}\alpha$.
If $\sigma$ is Lipschitz continuous and $u_0$ is measurable and bounded, then there exists a continuous random field $u\in \cup_{\gamma>0}\mathcal{L}^{\gamma,2}$ that solves \eqref{tfspde} with initial function $u_0$. Moreover, $u$ is a.s.-finite among all random fields that satisfy the following: There exists a positive and finite constant $L$-depending only on $Lip,$ and $\sup_{z\in \rd}|u_0(z)|$- such that
\begin{equation}\label{uniform-moment-bound}
\sup_{x\in \rd} \E\bigg(|u_t(x)|^k\bigg)\leq L^k\exp(Lk^{1+\alpha/(\alpha-\beta d)} t).
\end{equation}
\end{tm}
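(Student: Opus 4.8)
The plan is to construct the mild solution as the unique fixed point of the operator
$$(\mathcal{A}\Phi)_t(x):=\int_{\R^d}u_0(y)G_t(x-y)\,dy+(G\circledast\sigma(\Phi))_t(x)$$
on the Banach space $\mathcal{L}^{\gamma,2}$ of Definition \ref{comp-space}, for a sufficiently large $\gamma$, and then to bootstrap this into the quantitative bound \eqref{uniform-moment-bound} via a weakly singular Gronwall estimate. The deterministic term is harmless: since $G_t$ is the probability density of $X(E_t)$ it has total mass $1$, so $\bigl|\int_{\R^d}u_0(y)G_t(x-y)\,dy\bigr|\le\sup_{z}|u_0(z)|$ for every $(t,x)$, which places it in $\mathcal{L}^{\gamma,2}$ for all $\gamma>0$. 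For the noise term, the stochastic Young inequality (Proposition \ref{younginequality}) combined with the Lipschitz bound \eqref{Eq:Cond-sigma} gives, for $\Phi,\Psi\in\mathcal{L}^{\gamma,2}$,
$$\mathcal{N}_{\gamma,2}(\mathcal{A}\Phi-\mathcal{A}\Psi)=\mathcal{N}_{\gamma,2}\bigl(G\circledast(\sigma(\Phi)-\sigma(\Psi))\bigr)\le\sqrt2\,c_0\,Lip\cdot\mathcal{N}_{\gamma,2}(\Phi-\Psi),$$
with $c_0=\sqrt{4(2\gamma)^{-(1-\beta d/\a)}\Gamma(1-\beta d/\a)}$. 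Since $d<\min\{2,\beta^{-1}\}\a\le\beta^{-1}\a$ forces $1-\beta d/\a>0$, we have $c_0\to0$ as $\gamma\to\infty$; fixing $\gamma$ so large that $\sqrt2\,c_0\,Lip<1$, and noting that $\mathcal{A}$ maps $\mathcal{L}^{\gamma,2}$ into itself (again by Proposition \ref{younginequality} and the linear growth $|\sigma(x)|\le Lip(1+|x|)$), the Banach fixed point theorem yields a unique $u\in\mathcal{L}^{\gamma,2}$ with $u=\mathcal{A}u$. Because the norms $\mathcal{N}_{\gamma,2}$, $\gamma>0$, are mutually equivalent, $u$ is the unique solution of \eqref{tfspde} in $\cup_{\gamma>0}\mathcal{L}^{\gamma,2}$.

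For continuity I would apply Theorem \ref{thm:continuity-stoch-convolution} to $\Phi=\sigma(u)$, which lies in $\mathcal{L}^{\gamma,2}$ by the linear growth of $\sigma$; this produces a continuous modification of $G\circledast\sigma(u)$. Joint continuity in $(t,x)$ of the deterministic term $\int_{\R^d}u_0(y)G_t(x-y)\,dy$ follows from the representation \eqref{Eq:Green1} and dominated convergence, so the sum has a continuous modification, which we take to be $u$.

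The substantive step, and the one I expect to be the main obstacle, is the moment bound \eqref{uniform-moment-bound}. I would first record that running the fixed-point argument in $\mathcal{L}^{\gamma,k}$ for each $k\in[2,\infty)$ with its own large $\gamma=\gamma_k$ — permissible because the contraction factor $c_0k^{1/2}Lip$ from Proposition \ref{younginequality} can be made $<1$ — and using $\mathcal{L}^{\gamma,k}\subset\mathcal{L}^{\gamma,2}$ (Jensen) shows that the same $u$ belongs to every $\mathcal{L}^{\gamma_k,k}$; hence $H(t):=\sup_{x\in\R^d}\|u_t(x)\|_k^2$ is finite for all $t$ and $k$. Then, starting from $u=\mathcal{A}u$ and applying $|a+b|^2\le2|a|^2+2|b|^2$, the BDG inequality exactly as in the proof of Proposition \ref{younginequality}, Lemma \ref{Lem:Green1} (so that $\int_{\R^d}[G_{t-s}(y-x)]^2dy=C^\ast(t-s)^{-\beta d/\a}$), and $\|\sigma(u_s(y))\|_k^2\le2Lip^2(1+\|u_s(y)\|_k^2)$, one obtains
$$H(t)\le A+Bk\int_0^t(t-s)^{-\beta d/\a}\bigl(1+H(s)\bigr)\,ds,$$
with $A$ depending only on $\sup_z|u_0(z)|$ and $B$ only on $Lip$ and $C^\ast$. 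This is a weakly singular (fractional) Gronwall inequality of order $\theta:=1-\beta d/\a\in(0,1]$; the Henry--Gronwall lemma then gives
$$1+H(t)\le(1+A)\,E_{\theta}\!\bigl(Bk\,\Gamma(\theta)\,t^{\theta}\bigr),$$
and the exponential growth estimate $E_{\theta}(z)\le c\exp(c\,z^{1/\theta})$ for $z\ge0$, together with $1/\theta=\alpha/(\alpha-\beta d)$, yields $H(t)\le L_1\exp\bigl(L_1\,k^{\alpha/(\alpha-\beta d)}\,t\bigr)$. Finally $\E[|u_t(x)|^k]=\bigl(\|u_t(x)\|_k^2\bigr)^{k/2}\le H(t)^{k/2}$, so raising the last bound to the power $k/2$ produces exactly $\E[|u_t(x)|^k]\le L^k\exp\bigl(Lk^{1+\alpha/(\alpha-\beta d)}t\bigr)$ — the extra unit in the exponent of $k$ being the price of passing from the $L^2(\Omega)$-scale quantity $H$ to the $k$-th moment. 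The delicate issues are selecting the correct form of the fractional Gronwall lemma for a kernel that blows up at $s=t$ and tracking the Mittag-Leffler growth with constants depending on $Lip$ and $\sup_z|u_0(z)|$ only (the fixed parameters $\nu,\alpha,\beta,d$ being suppressed), not on $k$ or $t$.

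Uniqueness within the class of all random fields satisfying \eqref{uniform-moment-bound} then follows by applying the same scheme to a difference: if $u$ and $v$ are two such solutions, then $D(t):=\sup_{x}\|u_t(x)-v_t(x)\|_k^2$ is finite by \eqref{uniform-moment-bound} and satisfies $D(t)\le Bk\int_0^t(t-s)^{-\beta d/\a}D(s)\,ds$ with no additive term, so the Henry--Gronwall lemma forces $D\equiv0$, i.e.\ $u=v$ up to modification.
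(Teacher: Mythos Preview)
Your proposal is correct and the fixed-point construction in $\mathcal{L}^{\gamma,2}$ is equivalent to the paper's Picard iteration; the difference lies in how the moment bound \eqref{uniform-moment-bound} is extracted. The paper never invokes a Gronwall inequality: instead it runs the Picard scheme in the norm $\mathcal{N}_{\gamma_0,k}$ with the $k$-dependent weight $\gamma_0=qk^{\alpha/(\alpha-\beta d)}$, where $q$ is chosen so that the stochastic Young factor $c_0k^{1/2}Lip$ becomes $\le\tfrac14$, obtains $\mathcal{N}_{\gamma_0,k}(u^{(n)})\le L_1$ uniformly in $n$ and $k$, and then reads off $\|u_t(x)\|_k\le L_1e^{\gamma_0 t}$, which is exactly \eqref{uniform-moment-bound}. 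Your route via the Henry--Gronwall lemma and the asymptotic $E_\theta(z)\le c\exp(cz^{1/\theta})$ is a genuine alternative and arguably more explanatory: it makes clear that the exponent $\alpha/(\alpha-\beta d)=1/\theta$ arises as the reciprocal of the non-singular order $\theta=1-\beta d/\alpha$ of the kernel $\int[G_s]^2=C^\ast s^{-\beta d/\alpha}$, and it would transfer immediately to other weakly singular kernels. The paper's method, by contrast, stays entirely inside the $\mathcal{N}_{\gamma,k}$ framework already built, needs no external Gronwall or Mittag--Leffler input, and gives the uniform-in-$n$ bound on the iterates (Proposition~\ref{prop:picard}) as a by-product. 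For continuity the paper also proceeds slightly differently, deriving H\"older moment estimates for the increments of each $u^{(n)}$ uniformly in $n$ via Lemmas~\ref{cont-conv-space}--\ref{cont-conv-time} and passing to the limit by Fatou before invoking Kolmogorov, whereas you apply Theorem~\ref{thm:continuity-stoch-convolution} directly to $\Phi=\sigma(u)$ after $u$ is constructed; both are valid.
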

\begin{rk}
Theorem \ref{thm:existence-moment} implies that a random field solution exists when  $d<\min \{2, \beta^{-1}\}\alpha$. So in the case $\alpha=2, \beta<1/2$, a random field solution exists when $d=1,2,3$.
 The analogous result in the case $\beta=1$ is only valid when $d=1$.
\end{rk}
\begin{rk}
Is the power $k^{1+\alpha/(\alpha-\beta d)}$  in Theorem \ref{thm:existence-moment} artificial? Is it possible to find a lower bound for the moments with the same power of $k$?
\end{rk}

We use Picard iteration to prove Theorem \ref{thm:existence-moment} that is outlined in \cite[Chapter 1]{khoshnevisan-cbms} with crucial changes. Define $u_t^{(0)}(x):=u_0(x)$, and iteratively define $u_t^{(n+1)}$ from $u_t^{(n)}$ as follows:
\begin{equation}\label{Eq:Picard1}
\begin{split}
u_t^{(n+1)}(x)&:=(G_t*u_0)(x)+\int_{(0,t)\times \R^d}G_{t-r}(x-y)\sigma(u^{(n)}(r,y))W(drdy)\\
&:=(G_t*u_0)(x)+(G\circledast\sigma\bigg(u^{(n)}\bigg)W)_t(x)
\end{split}
\end{equation}
for all $n\geq 0, t>0$, and $x\in \rd$. Moreover, we set $u_0^{(k)}(x):=u_0(x)$ for every $k\geq 1$ and $x\in \rd$.

%analog of Proposition 5.8 in Davar's lecture notes!
\begin{prop}\label{prop:picard}
The random fields $\{u^{(n+1)}\}_{n=0}^\infty$ are well-defined, and each is in $\cup_{\gamma>0}\mathcal{L}^{\gamma,2}$. Moreover, there exist positive and finite constants $L_1$ and $L_2$--depending only on $Lip$, and $\sup_{z\in \rd}|u_0(z)|$--such that
\begin{equation}\label{uniform-moment-bound-picard}
\sup_{x\in \rd} \E\bigg(|u_t^{(n)}(x)|^k\bigg)\leq L_1^k\exp(L_2k^{1+\alpha/(\alpha-\beta d)} t),
\end{equation}
simultaneously for all $k\in [1,\infty)$, $n\geq 0$, and $t>0$.
\end{prop}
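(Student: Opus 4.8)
The plan is to argue by induction on $n$, showing that the estimate \eqref{uniform-moment-bound-picard} reproduces itself with the \emph{same} pair of constants $L_1,L_2$; since these are chosen independently of $n$ (and of $k,t,x$), the bound then holds simultaneously for all $n\ge0$, $k\in[1,\infty)$, $t>0$. The base case $n=0$ is immediate: $u^{(0)}\equiv u_0$ is bounded, so \eqref{uniform-moment-bound-picard} holds once $L_1\ge C_0:=\sup_{z\in\rd}|u_0(z)|$. For the inductive step, suppose $u^{(n)}\in\mathcal{L}^{\gamma,2}$ for some $\gamma>0$ and that \eqref{uniform-moment-bound-picard} holds for $n$. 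Since $|\sigma(z)|\le Lip(1+|z|)$, we get $\mathcal{N}_{\gamma,2}(\sigma(u^{(n)}))\le Lip(1+\mathcal{N}_{\gamma,2}(u^{(n)}))<\infty$, hence $\sigma(u^{(n)})\in\mathcal{L}^{\gamma,2}$; Theorem~\ref{thm:continuity-stoch-convolution} then shows that $G\circledast\sigma(u^{(n)})W$ is a well-defined (continuous) random field in $\mathcal{L}^{\gamma,2}$, and as $G_t$ is a probability density the deterministic term $G_t*u_0$ is bounded by $C_0$, so $u^{(n+1)}$ is well-defined and lies in $\mathcal{L}^{\gamma,2}$. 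It remains to upgrade this to the quantitative bound \eqref{uniform-moment-bound-picard} for $n+1$, which is the core of the argument.

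For the quantitative step, write $\theta:=\alpha/(\alpha-\beta d)$, so that $1-\beta d/\alpha=1/\theta$; the target for the $L^k(\Omega)$-norm is $\|u_t^{(n+1)}(x)\|_k\le L_1\exp(L_2k^\theta t)$, whose $k$-th power is exactly \eqref{uniform-moment-bound-picard}. For $1\le k<2$ this follows from the case $k=2$ since $\|\cdot\|_k\le\|\cdot\|_2$ on a probability space, so we may take $k\ge2$. Combining the triangle inequality in $L^k(\Omega)$, the bound $|(G_t*u_0)(x)|\le C_0$, the BDG inequality (\cite[Proposition~4.4]{khoshnevisan-cbms}, as used in Proposition~\ref{younginequality}), the linear growth of $\sigma$, and Lemma~\ref{Lem:Green1}, I obtain
\[
\|u_t^{(n+1)}(x)\|_k^2\le 2C_0^2+C\,k\,Lip^2 C^\ast\int_0^t(t-s)^{-\beta d/\alpha}\Big(1+\sup_{y\in\rd}\|u_s^{(n)}(y)\|_k^2\Big)\,ds .
\]
Inserting the $k$-th root of the inductive hypothesis, $\sup_{y}\|u_s^{(n)}(y)\|_k\le L_1\exp(L_2k^\theta s)$, changing variables $u=t-s$, extending the $u$-integral to $(0,\infty)$, and using $\int_0^\infty u^{-\beta d/\alpha}e^{-2L_2k^\theta u}\,du=(2L_2k^\theta)^{-1/\theta}\Gamma(1/\theta)=(2L_2)^{-1/\theta}k^{-1}\Gamma(1/\theta)$, the factor $k^{-1}$ cancels the $k$ coming from BDG and leaves
\[
\|u_t^{(n+1)}(x)\|_k^2\le 2C_0^2+C_1\,k\,t^{1/\theta}+C_2\,L_2^{-1/\theta}\,L_1^2\,e^{2L_2k^\theta t},
\]
with $C_1,C_2$ depending only on $Lip$ (and the fixed parameters $\nu,\alpha,\beta,d$). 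Finally, setting $v:=k^\theta t$ one has $k\,t^{1/\theta}e^{-2L_2k^\theta t}=v^{1/\theta}e^{-2L_2v}\le(2L_2e\theta)^{-1/\theta}$, so the middle term is $\le C_1(2L_2e\theta)^{-1/\theta}e^{2L_2k^\theta t}$; since $e^{2L_2k^\theta t}\ge1$, choosing first $L_2$ large enough that $C_2L_2^{-1/\theta}\le\tfrac12$ and then $L_1$ large enough (depending only on $C_0,Lip,\nu,\alpha,\beta,d$) that $2C_0^2+C_1(2L_2e\theta)^{-1/\theta}\le\tfrac12L_1^2$ yields $\|u_t^{(n+1)}(x)\|_k^2\le L_1^2e^{2L_2k^\theta t}$, i.e. \eqref{uniform-moment-bound-picard} for $n+1$ with the same $L_1,L_2$. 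Taking $k=2$ and any $\gamma>2^\theta L_2$ also re-confirms $u^{(n+1)}\in\mathcal{L}^{\gamma,2}$, closing the induction.

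The main obstacle is the self-referential exponential estimate: one must guess the correct profile $\exp(L_2k^\theta t)$ for the $k$-th moment norm — equivalently the exponent $k^{1+\theta}=k^{1+\alpha/(\alpha-\beta d)}$ in \eqref{uniform-moment-bound-picard} — and then verify that the extra power of $k$ produced by the Walsh/BDG inequality is \emph{exactly} absorbed by the $(2L_2k^\theta)^{-(1-\beta d/\alpha)}=(2L_2)^{-1/\theta}k^{-1}$ arising from the Gamma integral $\int_0^\infty u^{-\beta d/\alpha}e^{-2L_2k^\theta u}\,du$, the residual constant then being made small by enlarging $L_2$. Everything else — the reduction to $k\ge2$, the trivial sup-bound on $G_t*u_0$, the equivalence-of-norms and completion issues for $\mathcal{L}^{\gamma,2}$, and the bookkeeping that keeps $L_1,L_2$ independent of $n$ — is routine.
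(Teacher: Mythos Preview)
Your proposal is correct and follows essentially the same route as the paper: induction on $n$, BDG combined with Lemma~\ref{Lem:Green1}, and the Gamma-integral identity $\int_0^\infty u^{-\beta d/\alpha}e^{-2\gamma u}\,du\sim\gamma^{-(1-\beta d/\alpha)}$, which with $\gamma=L_2k^{\alpha/(\alpha-\beta d)}$ exactly cancels the factor $k$ coming from BDG. The only cosmetic difference is that the paper packages this via the norms $\mathcal{N}_{\gamma,k}$ and Proposition~\ref{younginequality} and then solves the resulting linear recursion $\mathcal{N}_{\gamma,k}(u^{(n+1)})\le\theta+\tfrac14\mathcal{N}_{\gamma,k}(u^{(n)})$, whereas you carry the target bound $\|u_t^{(n)}(x)\|_k\le L_1e^{L_2k^\theta t}$ directly through the induction; the two are equivalent since $\mathcal{N}_{\gamma,k}(u)\le L_1$ with $\gamma=L_2k^\theta$ is precisely that bound.
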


\begin{proof}
We prove this proposition using induction on $n$. Since $u_0$ is non-random, bounded, and measurable, it is in $\cup_{\gamma>0}\mathcal{L}^{\gamma,2}$. Moreover \eqref{uniform-moment-bound-picard} holds since

$$
\sup_{x\in \rd}\E\bigg(\big|u_t^{(0)}(x)\big|^k\bigg)=\sup_{z\in \rd}|u_0(z)|\leq L_1^k\exp(L_2k^{1+\alpha/(\alpha-\beta d)} t).
$$

Next suppose that the proposition holds  for some integer $n\ge 0$. We will show the proposition for $n+1$.

Let $k$ denote a fixed real number $\ge 2$. Now
$$
u_t^{(l+1)}(x):=(G_t*u_0)(x)+B_t^{(l)}(x),\ \ \mathrm{where}:
$$

$$
B_t^{(l)}(x):=\int_{(0,t)\times \R^d}G_{t-r}(x-y)\sigma(u^{(l)}(r,y))W(drdy),
$$
for all $t>0, x\in \rd$, and $l\ge 0$.

It is easy to see that $|(G_t*u_0)(x)|\leq \sup_{z\in \rd}|u_0(z)|$, hence for all $\gamma>0$
\begin{equation}\label{a-ineq}
\mathcal{N}_{\gamma, k}((G*u_0))\leq \sup_{z\in \rd}|u_0(z)|.
\end{equation}
We estimate $B^{(n)}$ using the Stochastic Young inequality in Proposition \ref{younginequality}:

\begin{equation}\label{bn-ineq}
    \begin{split}
    \mathcal{N}_{\gamma, k}\bigg(B^{(n)}\bigg)&\leq c_{\alpha, \beta, d}\frac{\sqrt{k}}{\gamma^{1-\beta d/\alpha}}\mathcal{N}_{\gamma, k}\bigg(\sigma(u^{(n)})\bigg)\\
    &\leq c_{\alpha, \beta, d}\cdot Lip \cdot \frac{\sqrt{k}}{\sqrt{\gamma^{1-\beta d/\alpha}}}\cdot \bigg\{1+\mathcal{N}_{\gamma, k}\bigg(u^{(n)}\bigg)\bigg\},
    \end{split}
\end{equation}
where $c_{\alpha, \beta, d}$ is the constant in Proposition \ref{younginequality}.

Now combining equations \eqref{a-ineq} and \eqref{bn-ineq} we get for all $\gamma>0$
\begin{equation}\label{u-n-ineq}
    \begin{split}
    \mathcal{N}_{\gamma, k}\bigg(u^{(n+1)}\bigg)& \leq \sup_{z\in \rd}|u_0(z)|+ c_{\alpha, \beta, d}\cdot Lip \cdot \frac{\sqrt{k}}{\sqrt{\gamma^{1-\beta d/\alpha}}}\\
    &+ c_{\alpha, \beta, d}\cdot Lip \cdot \frac{\sqrt{k}}{\sqrt{\gamma^{1-\beta d/\alpha}}}\cdot \mathcal{N}_{\gamma, k}\bigg(u^{(n)}\bigg).
    \end{split}
\end{equation}
We can find a constant $L_2>0$--depending only on $Lip$, $\alpha, \beta, d$--such that $\gamma:=L_2 k^{\alpha/(\alpha-\beta d)}$ satisfies
$$
c_{\alpha, \beta, d}\cdot Lip \cdot \frac{\sqrt{k}}{\sqrt{\gamma^{1-\beta d/\alpha}}}\leq \frac14,
$$
then
$$
\mathcal{N}_{\gamma, k}\bigg(u^{(n+1)}\bigg)\leq \sup_{z\in \rd}|u_0(z)|+\frac14+\frac14\mathcal{N}_{\gamma, k}\bigg(u^{(n)}\bigg).
$$

In order to simplify notation, let
$$
\theta=\sup_{z\in \rd}|u_0(z)|+\frac14.
$$

We solve recursively to find that

\begin{eqnarray*}
\mathcal{N}_{\gamma, k}\bigg(u^{(n+1)}\bigg)&\leq& \theta + \frac\theta4 +\frac{1}{4^2}\mathcal{N}_{\gamma, k}\bigg(u^{(n-1)}\bigg)\\
&&\le \cdots\le \sum_{j=1}^n\frac{\theta}{4^j}+ \frac{1}{4^{n+1}}\mathcal{N}_{\gamma, k}\bigg(u^{(0)}\bigg).
\end{eqnarray*}
But
$$
\mathcal{N}_{\gamma, k}\bigg(u^{(0)}\bigg)=\sup_{t\geq 0}\sup_{x\in\rd}\bigg(e^{-\gamma t}|u_0(x)|\bigg)=\sup_{z\in \rd}|u_0(z)|.
$$
Thus, for our choice of $\gamma$, we have
\begin{eqnarray*}
\mathcal{N}_{\gamma, k}\bigg(u^{(n+1)}\bigg)&\leq& \frac43 \theta+ 4^{-(n+1)} \sup_{z\in \rd}|u_0(z)|\leq \frac43 \theta + \sup_{z\in \rd}|u_0(z)|=L_1.
\end{eqnarray*}

Therefore by Theorem \ref{thm:continuity-stoch-convolution} we get $u^{(n+1)}\in \cup_{\gamma>0}\mathcal{L}^{\gamma,2}$. Moreover, the last inequality means:
$$
\E\bigg(\bigg|u_t^{(n+1)}(x)\bigg|^k\bigg)L_1^k e^{\gamma k t}=L_1^k\exp(L_2k^{1+\alpha/(\alpha-\beta d)} t).
$$
Hence, \eqref{uniform-moment-bound-picard}  holds for $n+1$, and this proves the induction step, and proves the proposition.
\end{proof}

\begin{proof}[{\bf PROOF OF THEOREM \ref{thm:existence-moment}.}]
Our proof follows similar steps as in the proof of Theorem 5.5 in \cite{khoshnevisan-cbms} with nontrivial crucial changes.
Let's choose and fix some $k\in [2,\infty)$.
Let us write
\begin{eqnarray*}
J&:=& u_t^{(n+1)}(x)-u_t^{(n)}(x)\\
&=&\int_{(0,t)\times \R^d}G_{t-r}(x-y)\bigg[\sigma(u^{(n)}(r,y))- \sigma(u^{(n-1)}(r,y))\bigg]W(drdy).
\end{eqnarray*}
By proposition \ref{prop:picard} every $u^{(n)}\in \mathcal{L}^{\gamma,2}$ for some $\gamma>0$, and hence $J$ is a well-defined stochastic integral for every $n\geq 0$. By the same proposition we can also choose a continuous version of $(t,x)\to u_t^{(n)}(x)$ for every $n$.

Next we estimate $J$. We apply the BDG inequality (Propositon 4.4 in \cite{khoshnevisan-cbms})to bound $\|J\|_k$ as follows:
\begin{eqnarray*}
\|J\|_k^2&\leq &C\cdot k \int_0^t dr\int_\rd dy [G_{t-r}(x-y)]^2\bigg\|\sigma(u^{(n)}_r(y))- \sigma(u^{(n-1)}_r(y))\bigg\|_k^2\\
&\leq & C\cdot  Lip^2k \int_0^t dr\int_\rd dy [G_{t-r}(x-y)]^2\bigg\|u^{(n)}_r(y)- u^{(n-1)}_r(y)\bigg\|_k^2\\
&\leq & C\cdot  Lip^2k\bigg[\mathcal{N}_{\gamma, k}\bigg(u^{(n)}- u^{(n-1)}\bigg)\bigg]^2 \int_0^te^{2\gamma r} dr\int_\rd dy [G_{t-r}(x-y)]^2
\end{eqnarray*}
Since  $\int_\rd dy [G_{t-r}(x-y)]^2=C^*(t-r)^{-\beta d/\alpha}$, it follows that
\begin{eqnarray*}
\|J\|_k^2
&\leq & C\cdot  Lip^2k e^{2\gamma t}\bigg[\mathcal{N}_{\gamma, k}\bigg(u^{(n)}- u^{(n-1)}\bigg)\bigg]^2 \int_0^t \frac{e^{-2\gamma r}}{r^{\beta d/\alpha}} dr\\
&\leq & \frac{C\cdot  Lip^2k}{\gamma^{1-\beta d/\alpha}} e^{2\gamma t}\bigg[\mathcal{N}_{\gamma, k}\bigg(u^{(n)}- u^{(n-1)}\bigg)\bigg]^2 .
\end{eqnarray*}
Since the right hand side does not depend on $(t,x)$ after dividing by $e^{2\gamma t}$, we optimize over $(t,x)$ to find the recursive inequality,
$$
\mathcal{N}_{\gamma, k}\bigg(u^{(n+1)}- u^{(n)}\bigg)\leq C\cdot Lip \mathcal{N}_{\gamma, k}\bigg(u^{(n)}- u^{(n-1)}\bigg)\bigg[\frac{\sqrt{k}}{\sqrt{\gamma^{1-\beta d/\alpha}}}\bigg].
$$

The preceding holds for all $\gamma>0$. We can choose $\gamma_0:=qk^{\alpha/(\alpha-\beta d)}$, where $q> L_2$ depends only on $L_2$ and ensures that
$$
\mathcal{N}_{\gamma_0, k}\bigg(u^{(n+1)}- u^{(n)}\bigg)\leq \frac14 \mathcal{N}_{\gamma_0, k}\bigg(u^{(n)}- u^{(n-1)}\bigg).
$$
According to Proposition \ref{prop:picard}
$$
\mathcal{N}_{\gamma_0, k}\bigg(u^{(1)}- u^{(0)}\bigg)\leq \mathcal{N}_{\gamma_0, k}\bigg(u^{(1)}\bigg)+\mathcal{N}_{\gamma_0, k}\bigg(u^{(0)}\bigg).
$$
Since $q>L_2$, Proposition \ref{prop:picard} implies that
$$
\mathcal{N}_{\gamma_0, k}\bigg(u^{(1)}\bigg)\vee \mathcal{N}_{\gamma_0, k}\bigg(u^{(0)}\bigg)\le L_1.
$$
Hence we obtain the estimate
$$
\mathcal{N}_{\gamma_0, k}\bigg(u^{(n+1)}- u^{(n)}\bigg)\leq \frac{L_1}{4^n},
$$
valid for all $n\geq 0$ and $k\in [2,\infty)$.
From this we obtain
\begin{enumerate}
\item The random field $u:=\lim_{n\to\infty}u^{(n)}$ exists, where the limit takes place almost surely and in every norm $\mathcal{N}_{\gamma_0, k}$;
\item the random field $\mathbb{S}$ defined by
$$
\mathbb{S}_t(x):=\lim_{n\to\infty} \int_{(0,t)\times \R^d}G_{t-r}(x-y)\sigma(u^{(n)}(r,y))W(drdy),$$
exists, where the limit takes place almost surely and in every norm $\mathcal{N}_{\gamma_0, k}$.
\end{enumerate}

Combining Lemmas \ref{cont-conv-space} and \ref{cont-conv-time}, applied to $\Phi:=u^{(n)}$,  with Proposition \ref{uniform-moment-bound-picard} we see that for every $k\in [2,\infty)$ and  $\tau\in (0,\infty)$ there exists a finite constant $A_{k,\tau}$ such that
$$
\E\bigg(\big|u_t^{(n)}(x)-u_{t'}^{(n)}(x')\big|^k\bigg)\leq  A_{k,\tau}\bigg(|x-x'|^{\min\left\{\left(\frac{\a-\beta d}{\beta}\right)^{-},2\right\}\frac k2}+|t-t'|^{\frac{k(1-\beta d/\alpha)}{2}}\bigg),
$$
simultaneously for all $t,t'\in [0,\tau]$, $x,x'\in \rd.$ The right hand side of this inequality does not depend on $n$. Hence, using Fatou's lemma  we get
\begin{equation}
\E\bigg(\big|u_t(x)-u_{t'}(x')\big|^k\bigg)\leq  A_{k,\tau}\bigg(|x-x'|^{\min\left\{\left(\frac{\a-\beta d}{\beta}\right)^{-},2\right\}\frac k2}+|t-t'|^{\frac{k(1-\beta d/\alpha)}{2}}\bigg).
\end{equation}

 Now by a suitable form of Kolmogorov continuity theorem in \cite[Theorem c.6]{khoshnevisan-cbms} we get that $u$ has a version that is continuous. Moreover, \eqref{uniform-moment-bound} holds with $L=\max\{q, L_1\}$ since
 $\mathcal{N}_{\gamma_0, k}(u)\leq L_1$ for all $k\in [2,\infty)$.

 So far, we know that
 \begin{equation}\label{eq-limit}
    u_t(x)=(G_t*u_0)(x)+\mathbb{S}_t(x),
 \end{equation}
 where the equality is understood in the sense that the $\mathcal{N}_{\gamma_0, k}$-norm of the difference between the two sides of that inequality is zero. Equivalently,  by the use of Fubini theorem we have shown that with probability one, the identity \eqref{eq-limit} holds for almost every $t>0$ and $x\in \rd$.

 Since $u=\lim_{n\to\infty} u^{(n)}$ and $u^{(n)}\in \mathcal{L}^{\gamma, 2}$ for some $\gamma>0$ that is independent of $n$, we can conclude that $u\in \mathcal{L}^{\gamma, 2}$, and hence
 $$
 \bar{\mathbb{S}}_t(x):= \int_{(0,t)\times \R^d}G_{t-r}(x-y)\sigma(u(r,y))W(drdy),
 $$
 is well-defined for all $t>0$, and $x\in \rd$. Now we apply the Fatou's Lemma together with Proposition \ref{younginequality} with $\Phi=u^{(n)}-u$ and $\gamma_0=qk^{\alpha/(\alpha-\beta d)}$ to get that
 $$
 \mathcal{N}_{\gamma_0, k}(\mathbb{S}-\bar{\mathbb{S}})\leq C\cdot\liminf_{n\to\infty}\mathcal{N}_{\gamma_0, k}\bigg(u^{(n)}- u\bigg)=0.
 $$
 Hence $\mathbb{S}$ and $\bar{\mathbb{S}}$ are versions of one another. Another application of Lemmas \ref{cont-conv-space} and \ref{cont-conv-time} shows that  $\mathbb{S}$  has a continuous version. We combine these results with \eqref{eq-limit} to see that the present version of $u$ is a mild solution--in the sense of   \eqref{Eq:Mild} for the right versions of the integrals--for the time  fractional  stochastic heat type equation \eqref{tfspde}. This completes the proof of existence.

 Suppose $v\in \mathcal{L}^{\gamma, 2}$ is another random field that is mild solution to \eqref{tfspde} with initial function $u_0$. We can argue  similar to the arguments above to show that if $Q$ is sufficiently large, $\gamma_1=Qk^{\alpha/(\alpha-\beta d)}$, then
 $
\mathcal{N}_{\gamma_1, k}(u-v)\leq \frac14 \mathcal{N}_{\gamma_1, k}(u-v).
$
Hence $u$ and $v$ are versions of one another. The other details are omitted.

\end{proof}
%\subsection{Mild implies weak}
%{\color{red}Can we do this section that is the section 5.3 in Davar's notes. I will work on this!}

\section{Non-existence of solutions}

%{\color{red}How about the results here. I am working on this! The result here are corresponding to Section 5.4 in Davar's notes.}

 In this section, we will establish the non-existence of  finite energy solutions when $\sigma $ grows faster than linear.
 We say that a random field $u$ is a finite energy solution to the stochastic heat equation \eqref{tfspde} when $u\in \cup_{\gamma>0}\mathcal{L}^{\gamma, 2}$ and there exists $\rho_*>0$ such that
 $$
 \int_0^\infty e^{-\rho_* t}\E(|u_t(x)|^2)dt<\infty\ \ \ \mathrm{for\ all}\ \ x\in \rd.
 $$

 \begin{rk}
 If $\rho\in(0,\infty)$, then
 $$
 \int_0^\infty e^{-\rho t}\E(|u_t(x)|^2)dt\leq [ \mathcal{N}_{\gamma,2}(u)]^2\cdot \int_0^\infty e^{-(\rho-2\gamma)t}dt.
 $$
 Therefore if $\rho>2\gamma$ and $\mathcal{N}_{\gamma,2}(u)<\infty$, then the preceding integral is finite.  By Theorem \ref{thm:existence-moment}, when $\sigma $ is Lipschitz-continuous function and $u_0$ is bounded and measurable, then there exists a finite energy solution to the time fractional stochastic  type equation \eqref{tfspde}.
 \end{rk}

 When $\sigma$ is Lipschitz continuous then it is at most linear growth. The following theorem shows that if we drop the assumption of linear growth, then we do not have a finite energy solution to the time fractional stochastic heat equation \eqref{tfspde}. This theorem extends the result of Foondun and Parshad \cite{foondun-parshad}
 \begin{tm}
 Suppose $\inf_{z\in \rd}u_0(z)>0$ and $\inf_{y\in \rd}|\sigma(y)|/|y|^{1+\epsilon}>0.$ Then, there is no finite-energy solution to the time fractional stochastic  heat type equation \eqref{tfspde}.
 \end{tm}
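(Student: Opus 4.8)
The plan is to derive a contradiction from the mild formulation \eqref{Eq:Mild} by showing that the second moment of any purported finite-energy solution must blow up. First I would fix $x\in\rd$ and write
$$
\E(|u_t(x)|^2)=\left(\int_{\rd}u_0(y)G_t(x-y)dy\right)^2+\int_0^t\int_{\rd}[G_{t-r}(x-y)]^2\,\E(|\sigma(u_r(y))|^2)\,dy\,dr,
$$
using the Walsh isometry and the fact that the cross term vanishes; this uses $\sigma(u)\in\mathcal{L}^{\gamma,2}$, which holds for a finite-energy solution. Because $\inf_z u_0(z)>0$ and $G_t$ is a probability density, the first term is bounded below by a positive constant; because $\inf_y|\sigma(y)|/|y|^{1+\epsilon}>0$ there is $c>0$ with $|\sigma(y)|^2\ge c|y|^{2+2\epsilon}$, and then by Jensen's inequality (applied to the convex function $z\mapsto z^{1+\epsilon}$) $\E(|\sigma(u_r(y))|^2)\ge c\,(\E|u_r(y)|^2)^{1+\epsilon}$. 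This yields the nonlinear integral inequality
$$
\E(|u_t(x)|^2)\ \ge\ a\ +\ c\int_0^t\int_{\rd}[G_{t-r}(x-y)]^2\,\big(\E|u_r(y)|^2\big)^{1+\epsilon}\,dy\,dr
$$
for some $a>0$.

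Next I would pass to the Laplace transform in $t$. Set $H(\lambda,x):=\int_0^\infty e^{-\lambda t}\E(|u_t(x)|^2)\,dt$, which is finite for $\lambda>\rho_*$ by the finite-energy hypothesis. Taking the Laplace transform of the integral inequality and using $\int_{\rd}[G_{s}(y)]^2dy=C^\ast s^{-\beta d/\a}$ from Lemma \ref{Lem:Green1}, the convolution in $(t,r)$ turns into a product, so that
$$
H(\lambda,x)\ \ge\ \frac{a}{\lambda}\ +\ c\,C^\ast\,\Gamma(1-\tfrac{\beta d}{\a})\,\lambda^{-(1-\beta d/\a)}\int_{\rd}\big(\text{something involving }\E|u_r(y)|^2\big)\,dy\,,
$$
where the spatial integral must be controlled. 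Here the spatial integration causes a difficulty because $\int_{\rd}(\E|u_r(y)|^2)^{1+\epsilon}dy$ need not be finite; the remedy, following Foondun--Parshad, is to exploit spatial homogeneity-type lower bounds or to localize and use the lower bound $\E|u_r(y)|^2\ge a>0$ uniformly in $y$ to replace $(\E|u_r(y)|^2)^{1+\epsilon}$ by $a^\epsilon\,\E|u_r(y)|^2$, reducing to a \emph{linear} renewal-type inequality whose kernel has Laplace symbol $c'\lambda^{-(1-\beta d/\a)}$. Iterating this linear inequality (a Gronwall/renewal argument in the Laplace domain) forces $H(\lambda,x)=\infty$ for every $\lambda$, contradicting finiteness for $\lambda>\rho_*$.

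More precisely, after the localization one gets $H(\lambda,x)\ge \frac{a}{\lambda}+ c'\lambda^{-(1-\beta d/\a)}\,\inf_{z}H(\lambda,z)$ with $c'=c\,a^\epsilon C^\ast\Gamma(1-\beta d/\a)>0$; taking the infimum over $x$ on the left and writing $h(\lambda):=\inf_x H(\lambda,x)$ gives $h(\lambda)\ge \frac{a}{\lambda}+c'\lambda^{-(1-\beta d/\a)}h(\lambda)$, hence $h(\lambda)\big(1-c'\lambda^{-(1-\beta d/\a)}\big)\ge a/\lambda$. Since $1-\beta d/\a\in(0,1)$, the factor $1-c'\lambda^{-(1-\beta d/\a)}$ is negative for all $\lambda$ below a threshold $\lambda_0$, yet positive quantities $h(\lambda)$ and $a/\lambda$ would then force $h(\lambda)<0$, which is impossible; alternatively, iterating $h\ge c'\lambda^{-(1-\beta d/\a)}h$ infinitely many times shows $h(\lambda)=\infty$. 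Either way we contradict the standing assumption $H(\rho_*+1,x)\le\int_0^\infty e^{-(\rho_*+1)t}\E|u_t(x)|^2dt<\infty$. The main obstacle I anticipate is making the spatial step rigorous: justifying the uniform-in-$y$ positive lower bound $\inf_{t\ge t_1,y}\E|u_t(y)|^2>0$ (which should follow by bounding $\E|u_t(x)|^2$ below by the deterministic term $(\int G_t(x-y)u_0(y)dy)^2\ge(\inf u_0)^2$ for all $t$), and ensuring the Fubini/Tonelli interchange and the Laplace-transform manipulations are legitimate before we know finiteness — this is handled by first proving the estimates with $t$ restricted to a bounded interval, or by working with the truncated noise and passing to the limit, exactly as in \cite{foondun-parshad}.
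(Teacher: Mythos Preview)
Your opening steps coincide with the paper's: from the Walsh isometry and Jensen you correctly reach
\[
\E(|u_t(x)|^2)\ \ge\ l^2\ +\ c^2\int_0^t\|G_{t-r}\|_{L^2(\rd)}^2\,[I(r)]^{1+\epsilon}\,dr,\qquad I(r):=\inf_{y}\E(|u_r(y)|^2),
\]
and hence, with $\tilde I(\lambda):=\int_0^\infty e^{-\lambda t}I(t)\,dt$ and $\|G_s\|_{L^2}^2=C^\ast s^{-\beta d/\a}$,
\[
\lambda\tilde I(\lambda)\ \ge\ l^2\ +\ c'\,\lambda^{-(1-\beta d/\a)}\,\lambda\tilde I(\lambda).
\]
(One small slip: your $h(\lambda)=\inf_x H(\lambda,x)$ is not the right object, because $\int e^{-\lambda r}\inf_z\E|u_r(z)|^2\,dr\le\inf_z\int e^{-\lambda r}\E|u_r(z)|^2\,dr$ goes the wrong way; take the spatial infimum \emph{before} the Laplace transform, as the paper does.)

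The real gap is the step where you conclude ``$H(\lambda,x)=\infty$ for every $\lambda$''. Your linearization $[I(r)]^{1+\epsilon}\ge a^\epsilon I(r)$ discards the superlinear growth and yields only $\lambda\tilde I(\lambda)\ge l^2+A\,\lambda\tilde I(\lambda)$ with $A=c'\lambda^{-(1-\beta d/\a)}$. Iterating this forces $\tilde I(\lambda)=\infty$ \emph{only when} $A\ge1$, i.e.\ for $\lambda\le\lambda_0:=(c')^{\a/(\a-\beta d)}$; for $\lambda>\lambda_0$ the iteration converges and gives no contradiction. Since the finite-energy hypothesis only grants $H(\lambda,x)<\infty$ for $\lambda\ge\rho_\ast$, and $\rho_\ast$ may well exceed $\lambda_0$, you have not produced a contradiction.

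What is missing is precisely the bootstrap that exploits the exponent $1+\epsilon$. The paper argues as follows: once $\tilde I(\theta_0)=\infty$ one has $I(t)\gtrsim e^{\theta_0 t}$ for large $t$; substituting this back into the \emph{nonlinear} inequality $I(t)\ge l^2+c^2\int_0^t (t-s)^{-\beta d/\a}[I(s)]^{1+\epsilon}\,ds$ gives $I(t)\gtrsim t^{-\beta d/\a}e^{\theta_0(1+\epsilon)t}$, hence $\tilde I(\theta_1)=\infty$ with $\theta_1=\theta_0(1+\epsilon)$. Repeating, $\tilde I(\theta_0(1+\epsilon)^n)=\infty$ for every $n$, so $\tilde I(\lambda)=\infty$ for all $\lambda>0$, which finally contradicts finite energy. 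Your linear renewal argument cannot reproduce this escalation; you must return to the $(1+\epsilon)$-power at the bootstrap stage.
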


 \begin{proof}
 We will adapt the methods in  \cite{foondun-parshad} with many crucial changes. Let $c:=\inf_{y\in \rd}|\sigma(y)|/|y|^{1+\epsilon}$ and $l:=\inf_{z\in \rd}u_0(z)$.

% Suppose, on the contrary, that there exists a finite-energy solution $u$ to \eqref{tfspde}.
 \begin{eqnarray}\label{supersolution}
\E(|u_t(x)|^2) &=& |(G_t\ast u_0)(x)|^2 + \int_0^tds\int_{{\R^d}}dy[G_{t-s}(y-x)]^2\E(\sigma^2(u_s(y))\nonumber\\
&\geq&l^2 + c^2\int_0^t\int_{{\R^d}}[G_{t-s}(y-x)]^2\E(|u_s(y)|^{2(1+\epsilon)}) dy\\
&\geq&l^2 + c^2\int_0^t\int_{{\R^d}}[G_{t-s}(y-x)]^2\inf_{y\in \R^d}[\E(|u_s(y)|^{2})]^{1+\epsilon} dy,
\end{eqnarray}
 with an application of the Jensen's inequality in the last inequality.
 If we let
 $$
 I(t) := \mbox{inf}_{y\in {\R^d}}\E(|u_s(t)(y)|^2) \ \ (t\geq 0),
 $$
 it satisfies the following inequality
 \begin{equation}\label{superforI}
 \begin{split}
 I(t)&\geq l^2+c^2\int_0^t||G_{t-s}||^2_{L^2(\rd)}[I(s)]^{1+\epsilon}ds\\
 &=l^2+c^2\int_0^t (t-s)^{-d\beta/\a}[I(s)]^{1+\epsilon}ds.
  \end{split}
 \end{equation}
  Now we define the Laplace transform of $I$ as
 $$
 \tilde{I}(\theta):=\int_0^\infty e^{-\theta s}I(s)ds.
 $$
% Since $u$ is a finite energy solution by assumption, we certainly have $\tilde{I}(\rho_*)<\infty$ for some $\rho_*>0$. Choose and fix such a $\rho_*$.
Because
 $$
 \int_0^\infty e^{-\theta s}||G_{s}||^2_{L^2(\rd)}=\int_0^\infty e^{-\theta s}C^*s^{-\beta d/\alpha} ds=C_1 \theta^{-(1-\beta d/\alpha)}
 $$
 for all $\theta>0$, where $C_1=C^* \Gamma(1-\beta d/\alpha)$.
 It follows that
 \begin{equation}
 \tilde{I}(\theta)\geq \frac{l^2}{\theta}+c^2 C_1 \theta^{-(1-\beta d/\alpha)} \int_0^\infty e^{-\theta s}[I(s)]^{1+\epsilon}ds.
 \end{equation}
  We multiply both sides by $\theta$ and use Jensen's inequality to find that

  \begin{eqnarray*}
  \theta\tilde{I}(\theta)&\geq& l^2+c^2 C_1 \theta^{-(1-\beta d/\alpha)}\bigg[ \int_0^\infty \theta e^{-\theta s}[I(s)]ds\bigg]^{1+\epsilon}\\
  &=& l^2+c^2 C_1 \theta^{-(1-\beta d/\alpha)}\big[\theta \tilde{I}(\theta)\big]^{1+\epsilon},
  \end{eqnarray*}
  for all $\theta >0$. It follows that $ \theta\tilde{I}(\theta)\geq l^2>0$, and hence $\tilde{I}(\theta)>0$ for all $\theta>0$. Moreover, $[\theta\tilde{I}(\theta)]^{1+\epsilon}\geq l^{2\epsilon}\theta\tilde{I}(\theta),$ and hence
  $$
  \theta\tilde{I}(\theta)\geq l^2+c^2 C_1 \theta^{-(1-\beta d/\alpha)}l^{2\epsilon}\theta\tilde{I}(\theta).
  $$
   We shall show that
   $$\tilde{I}(\theta) = \infty \ \ \text{for}\ \ 0 < \theta \leq \theta_{0}:=(c^2 C_1 l^{2\epsilon})^{\alpha/(\alpha-\beta d)}.$$
  %This implies that $\theta\tilde{I}(\theta)=\infty$ when
Under the assumption that $0 < \theta \leq \theta_{0}$%(c^2 C_1 l^{2\epsilon})^{\alpha/(\alpha-\beta d)}$
, the constant $A := c^2 C_1 \theta^{-(1-\beta d/\alpha)}l^{2\epsilon}$ is greater than or equal to $1$. With  recursive argument we can show that for any positive integer n,
  $$\theta\tilde{I}(\theta)\geq l^2 + A^n\theta\tilde{I}(\theta).$$
Since $A\geq 1, l >0 $ and $n$ is arbitrary. We see that $\theta\tilde{I}(\theta) = \infty.$  Which shows that $\tilde{I}(\theta) = \infty$ for a given range of $\theta.$ %Suppose that there exists a $\theta_0<\infty$  which is the minimum of all $\theta$ such that $\tilde{I}(\theta) < \infty$ for all $\theta > \theta_0$. But this contradicts \eqref{superforI}. Hence, there is no finite-energy solution.

Now we will show that there exists $\theta_1>\theta_0$ such that $\tilde{I}(\theta_1) = \infty.$ Recall that $\tilde{I}(\theta_0) = \infty$. That means $I(t) \geq ce^{\theta_0 t}$ for large $t$.

For $t$ large enough, we have
\begin{eqnarray}
I(t)&\geq& l^2 + c^2\int_0^t (t-s)^{-d\beta/\a}[I(s)]^{1+\epsilon}ds\nonumber\\
&\geq& l^2 + c^2t^{-d\beta/\a}\int_0^t [I(s)]^{1+\epsilon}ds\nonumber\\
&\geq&l^2 + c^2t^{-d\beta/\a}\int_{t/2}^t e^{\theta_0(1+\epsilon)s}ds\nonumber\\
&\geq&l^2 + c^2t^{-d\beta/\a} e^{\theta_0(1+\epsilon)t}\left(1-e^{-\theta_0(1+\epsilon)t/2}\right).
\end{eqnarray}
Using the inequality $x/(1+x) < (1-e^{-x})$ for $x>-1$ we get
$$I(t)\geq l^2 + c^2t^{-d\beta/\a} e^{\theta_0(1+\epsilon)t},$$
again for large enough $t$. Now if we take $\theta_1 = \theta_0(1+\epsilon)$, we have $\tilde{I}(\theta_1) = \infty.$ Since we can repeat this process over and over again there is no minimum $\theta < \infty$ such that $\tilde{I}(\theta) = \infty.$

{    Now, if we assume there is a finite energy solution, we certainly have $\tilde{I}(\theta_\ast)<\infty$ for some $\theta_\ast > 0.$ With simple algebra we can show that $$\tilde{I}(\theta)= \int_0^\infty e^{-\theta s}I(s)ds=\int_0^\infty e^{-(\theta-\theta_\ast + \theta_\ast)s}I(s)ds\leq \int_0^\infty e^{-\theta_\ast s}I(s)ds<\infty,$$ for $\theta\geq\theta_\ast$. That means $\tilde{I}(\theta)<\infty$ for all $\theta \geq \theta_\ast.$ But this contradicts the above argument. Hence there is no finite energy solution.} This completes the proof.

 \end{proof}

 \section{Equivalence of time fractional SPDEs to higher order SPDE's}
In this section we will give a connection of time fractional SPDE's and Higher order parabolic SPDE's.

%{\color{red} I will work on this Part-Erkan!}
%{Parabolic SPDE's}

 Allouba \cite{allouba-spde1, allouba-spde2} considered the following SPDE($k=1,2,\cdots$):
\begin{equation}\label{spde-order-m}
\begin{split}
\partial_t u^k_t(x) & =\sum_{j=1}^{2^k-1} \frac{t^{j/2^k-1}}{\Gamma(j/2^k)} \Delta_x^j u_0^k(x)
+ \Delta_x^{2^k} u^k_t(x) +\sigma(u^k)\stackrel{\cdot}{W}(t,x);\\
 u^k_t(x)|_{t=0} &=  u^k_0(x)
\end{split}
\end{equation}
Where $\sigma$ satisfy Lipschitz and linear growth conditions.
Allouba \cite{allouba-spde2}  showed that there exists a path wise unique strong (mild) solution in all space dimensions $d=1,2,3$ given by
\begin{equation}
\begin{split}
u^k_t(x)&=\int_\R^d K^{BM^d,E^\beta}_t(x-y)u^k_0(y)dy\\
      &+\int_0^t\int_\R  K^{BM^d,E^\beta}_{t-r}(x-y)\sigma(u^k_s(y))W(dydr)
\end{split}
\end{equation}
where $K^{BM^d,E^\beta}_t(x-y)$ is the transition density of a $d$-dimensional process $B(E^\beta)$, here $B$ is a $d$-dimensional Brownian motion,
and $E^\beta$ is inverse of a stable subordinator of index $\beta=2^{-k}$:
$$
K^{BM^d,E^\beta}_t(x-y)=\int_{0}^\infty \frac{e^{-\frac{||x-y||^2}{4s}}}{\sqrt{4\pi s}}f_{E_t}(s)ds.
$$

Allouba  \cite{allouba-spde1, allouba-spde2} also showed that the  H\"older  continuity exponent (time, space)  of the mild solution of \eqref{spde-order-m} is
$$\left(\left(\frac{2\beta^{-1}-d}{4\beta^{-1}}\right)^{-}, \left(\frac{4-d}{2}\wedge 1\right)^{-}\right),$$
  (where $\beta=2^{-k}$) in space dimensions $d=1, 2,3$.

Since  Baeumer et al.\cite{bmn-07} showed that $G_t(x-y)=K^{BM^d,E^\beta}_t(x-y)$ for $\alpha=2$ and
$$
K^{Y,E^\beta}_t(x-y)=\int_{0}^\infty p_{Y(s)}(x-y)f_{E_t}(s)ds =G_{t}(x-y),\ \ \mathrm{for}\ \ 0<\alpha<2,
$$
we obtain the next result
\begin{tm}\label{thm:equivalence}
Let $L_x=-\nu(-\Delta)^{\alpha/2}$  for $\alpha\in (0,2])$ and suppose $\sigma$ is Lipschitz.
 For any $k=1,2,3,\ldots$ both the higher order SPDE
{
\begin{equation}\label{spde-order-m1}
\begin{split}
\partial_t u_t^k(x) & =\sum_{j=1}^{2^k-1} \frac{t^{j/2^k-1}}{\Gamma(j/2^k)} L_x^j u^k_0(x)
+ L_x^{2^k} u_t^k(x) +\sigma(u_k)\stackrel{\cdot}{W}(t,x);\\
 u^k_t(x)|_{t=0} &=  u^k_0(x),
\end{split}
\end{equation}
}
and the time fractional  SPDE
\begin{equation}\label{tfspde-11}
 \partial^{1/2^k}_tu^k_t(x)=L_x u^k_t(x)+I^{1-1/2^k}_t[\sigma(u^k)\stackrel{\cdot}{W}(t,x)];\ \   u^k_t(x)|_{t=0} =  u^k_0(x),
 \end{equation}
have the same unique mild solution given by
{
\begin{equation}\begin{split}
u^k_t(x)&=\int_\R^d K^{Y,E^\beta}_t(x-y)u^k_0(y)dy\\
      &+\int_0^t\int_\R  K^{Y,E^\beta}_{t-r}(x-y)\sigma(u^k_s(y))W(dydr).
\end{split}
\end{equation}
}

\end{tm}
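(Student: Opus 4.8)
The plan is to show that each of \eqref{spde-order-m1} and \eqref{tfspde-11} admits a unique mild solution, that both mild solutions satisfy \emph{one and the same} Walsh--Dalang integral equation with kernel $K^{Y,E^\beta}_t=G_t$, and then to invoke uniqueness to conclude that they coincide. First I would dispose of the time fractional equation \eqref{tfspde-11}. It is the special case $\beta=1/2^k$ of \eqref{tfspde} with $L_x=-\nu(-\Delta)^{\alpha/2}$, and since $k\ge 1$ gives $\beta^{-1}=2^k\ge 2$, the standing hypothesis $d<\min\{2,\beta^{-1}\}\alpha$ reduces to $d<2\alpha$ (so $d\in\{1,2,3\}$ when $\alpha=2$, matching Allouba's range). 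Hence Theorem \ref{thm:existence-moment} applies (with $u^k_0$ bounded and measurable) and yields a unique continuous $u^k\in\cup_{\gamma>0}\mathcal{L}^{\gamma,2}$ satisfying
\[
u^k_t(x)=\int_{\R^d}u^k_0(y)G_t(x-y)\,dy+\int_0^t\!\!\int_{\R^d}\sigma(u^k_r(y))G_{t-r}(x-y)\,W(dr\,dy),
\]
where $G$ is the fundamental solution of $\partial_t^\beta G=L_xG$. By the conditioning identity \eqref{Eq:Green1} together with the subordination result of Baeumer et al.\ \cite{bmn-07}, $G_t=K^{Y,E^\beta}_t$, which turns this into exactly the displayed random field of the theorem.

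Next I would identify $K^{Y,E^\beta}_t$ as the fundamental solution of the higher order operator in \eqref{spde-order-m1}. The deterministic core of this step is the observation that, by \eqref{fouriertransformofG}, the spatial Fourier transform of $w_t(x):=\int_{\R^d}K^{Y,E^\beta}_t(x-y)\phi(y)\,dy$ equals $E_\beta(-\nu|\xi|^\alpha t^\beta)\widehat\phi(\xi)$, and that for $\beta=1/2^k$ the scalar function $t\mapsto E_\beta(-\lambda t^\beta)$ solves
\[
\tfrac{d}{dt}y(t)=\sum_{j=1}^{2^k-1}\frac{t^{\,j/2^k-1}}{\Gamma(j/2^k)}(-\lambda)^j+(-\lambda)^{2^k}y(t),\qquad y(0)=1,
\]
with $\lambda=\nu|\xi|^\alpha$. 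One checks this identity directly, e.g.\ using that the Laplace transform of $t\mapsto E_\beta(-\lambda t^\beta)$ is $s^{\beta-1}/(s^\beta+\lambda)$ and summing the resulting finite geometric series (for $k=1$ it is the classical identity for $E_{1/2}(-\lambda\sqrt t)=e^{\lambda^2t}\mathrm{erfc}(\lambda\sqrt t)$). Since $\widehat{L_x^j\phi}(\xi)=(-\nu|\xi|^\alpha)^j\widehat\phi(\xi)$, inverting the Fourier transform shows that $w_t$ solves the deterministic homogeneous Cauchy problem underlying \eqref{spde-order-m1}, i.e.\ $K^{Y,E^\beta}_t$ is precisely the fundamental solution carrying the prescribed singular lower-order forcing $\sum_j t^{\,j/2^k-1}\Gamma(j/2^k)^{-1}L_x^j u^k_0$.

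With that fundamental solution in hand, the (stochastic) Duhamel principle gives that a mild solution of \eqref{spde-order-m1} satisfies exactly the integral equation of the previous paragraph with $K^{Y,E^\beta}$ in place of $G$; existence and uniqueness of such a mild solution in $\cup_{\gamma>0}\mathcal{L}^{\gamma,2}$ then follow from a Picard iteration identical in structure to Proposition \ref{prop:picard} and the proof of Theorem \ref{thm:existence-moment}, the only change being the explicit kernel. This is Allouba's argument in \cite{allouba-spde1,allouba-spde2}, whose sole structural input is that $p_{Y(s)}$ is the semigroup transition density of $L_x$, so it transfers verbatim from $\Delta$ to $L_x=-\nu(-\Delta)^{\alpha/2}$. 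Finally, since \eqref{spde-order-m1} and \eqref{tfspde-11} both have mild solutions solving the same Walsh--Dalang equation with kernel $K^{Y,E^\beta}_t=G_t$, and each such equation has a unique solution in $\cup_{\gamma>0}\mathcal{L}^{\gamma,2}$, the two solutions coincide and equal the asserted random field.

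I expect the main obstacle to be the middle step: rigorously verifying that the subordinated kernel $K^{Y,E^\beta}_t$ really is the fundamental solution of $\partial_t-L_x^{2^k}$ with the prescribed singular lower-order forcing, and in particular confirming that Allouba's $\alpha=2$ construction and its regularity/uniqueness bookkeeping go through for the fractional-Laplacian generator (for $\alpha<2$ the iterates $L_x^j$ are themselves nonlocal, so some care is needed in interpreting \eqref{spde-order-m1} and in justifying the Fourier manipulations on the relevant function spaces). Everything else is a routine transcription of the Picard-iteration machinery already developed in Sections~3--4 together with the cited subordination identities.
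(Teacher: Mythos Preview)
Your proposal is correct and follows the same route as the paper. In fact, the paper does not give a standalone proof of this theorem at all: it simply notes that Allouba \cite{allouba-spde1,allouba-spde2} established the mild solution of the higher-order SPDE with kernel $K^{Y,E^\beta}_t$, that Baeumer et al.\ \cite{bmn-07} identified $K^{Y,E^\beta}_t=G_t$, and that the paper's own Theorem \ref{thm:existence-moment} handles the time-fractional side, so the result follows; your write-up fleshes out exactly these steps and correctly flags the one nontrivial point the paper leaves to the reader, namely extending Allouba's $\alpha=2$ fundamental-solution identification to the fractional Laplacian.
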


\begin{rk}
Theorem \ref{thm:equivalence} extends the particular case considered by Baeumer et al. \cite{bmn-07}--the case of $\sigma\equiv 0$-- to the equivalence of SPDE's.
Allouba \cite[equation (1.8)]{allouba-spde2} mentions equivalence of \eqref{spde-order-m} to the equation
\begin{equation}\label{tfspde-12}
 \partial^{1/2^k}_tu^k_t(x)=\Delta_x u^k_t(x)+\sigma(u^k)\stackrel{\cdot}{W}(t,x);\ \   u^k_t(x)|_{t=0} =  u^k_0(x).
 \end{equation}
Our theorem  \ref{thm:equivalence} gives the correct equivalence.
\end{rk}

\end{document}